\author[C. Goel, S. Kuhlmann, B. Reznick]{Charu Goel, Salma Kuhlmann, Bruce Reznick}
\address{Department of Mathematics and Statistics, University of
Konstanz, Universit\"{a}tsstrasse 10, 78457 Konstanz, Germany}
\email{charu.goel@uni-konstanz.de}
\address{Department of Mathematics and Statistics, University of
Konstanz, Universit\"{a}tsstrasse 10, 78457 Konstanz, Germany}
\email{salma.kuhlmann@uni-konstanz.de}
\address{Department of Mathematics, University of
Illinois at Urbana-Champaign, Urbana, IL 61801}
\email{reznick@math.uiuc.edu}
 \title[The analogue of Hilbert's 1888 theorem for Even Symmetric Forms]
{The analogue of Hilbert's 1888 theorem for Even Symmetric Forms\ }
\date{}
\keywords{positive polynomials, sums of squares, even symmetric forms}
\subjclass[2010]{11E76, 11E25, 05E05, 14P10}
\numberwithin{equation}{section}
\begin{document}
\maketitle

\theoremstyle{definition}
  \newtheorem{thm}{Theorem}[section]
 \newtheorem{df}[thm]{Definition}
 \newtheorem{nt}[thm]{Notation}
 \newtheorem{prop}[thm]{Proposition}
 \newtheorem{rk}[thm]{Remark} 
 \newtheorem{lem}[thm]{Lemma}
 \newtheorem{obs}[thm]{Observation}
 \newtheorem{cor}[thm]{Corollary}
 \newtheorem{nthng}[thm]{}
\newcommand{\al}{\alpha}
\newcommand{\be}{\beta}
\newcommand{\ga}{\gamma}
\newcommand{\la}{\lambda}

%\footnotetext [1] {(Draft 15.0) Last update: CG, 26.09.16}

\begin{abstract}
Hilbert proved in 1888 that a positive semidefinite (psd) real form is a sum of squares (sos) of real forms if and only if $n=2$ or $d=1$ or $(n,2d)=(3,4)$, where $n$ is the number of variables and $2d$ the degree of the form. We study the analogue for even symmetric forms. We establish that an even symmetric $n$-ary $2d$-ic psd form is sos if and only if $n=2$ or $d=1$ or $(n,2d)=(n,4)_{n \geq 3}$ or $(n,2d)= (3,8)$.
\end{abstract}

\section{Introduction}
A real form (homogeneous polynomial) $f$ is
called \textit{positive semidefinite} (psd) if it takes only non-negative values and it is called a \textit{sum of squares} (sos) if there exist other forms $h_j$ so that $f = h_1^2 +
\cdots + h_k^2$. Let $\mathcal{P}_{n,2d}$ and $\Sigma_{n,2d}$ denote the cone of psd and sos $n$-ary $2d$-ic forms (i.e. forms of degree $2d$ in $n$ variables)
respectively.

In 1888, Hilbert \cite{Hilb_1} gave a celebrated theorem that characterizes the pairs $(n,2d)$ for which every $n$-ary $2d$-ic psd form can be written as a  sos of forms. It states that every $n$-ary $2d$-ic psd form is sos if and only if  $n=2$ or $d=1$ or $(n,2d)=(3,4)$. Hilbert demonstrated that $\Sigma_{n,2d}\subsetneq \mathcal{P}_{n,2d}$ for $(n,2d)=(4,4), (3,6)$, thus reducing the problem to these two basic cases.

Almost ninety years later, Choi and Lam \cite{CL_1} returned to this subject. In particular, they considered the question of when a symmetric psd form is sos. A form $f(x_1,\ldots,x_n)$ is called \textit{symmetric} if $f(x_{\sigma(1)},\ldots,x_{\sigma(n)})= f(x_1,\ldots,x_n)$ for all $\sigma \in S_n$. %i.e. forms invariant under the action of the group $S_n$.
As an analogue of Hilbert's approach, they reduced the problem to finding symmetric psd not sos $n$-ary $2d$-ics for the pairs $(n,4)_{n \geq 4}$ and $(3,6)$. They asserted the existence of psd not sos symmetric quartics in $n \geq 5$ variables; contingent on these examples, the answer is the same as that found by Hilbert. In \cite{G-K-R-1}, we constructed these quartic forms.

A form is \textit{even symmetric} if it is symmetric and in each of its terms every variable has even degree. %  i.e. invariant under the action of the group $S_n \times \mathbb{Z}_{2}^n$.
\noindent Let $S\mathcal{P}^e_{n,2d}$ and $S\Sigma^e_{n,2d}$ denote the set of even symmetric psd and even symmetric sos $n$-ary $2d$-ic forms respectively. Set ${\Delta}_{n,2d}:=S\mathcal{P}^e_{n,2d} \setminus S\Sigma^e_{n,2d}$.
In this paper, we investigate the following question:
\vspace{-0.2cm}

\begin{equation} \label{Even symm ques} \mathcal{Q}(S^e): \text{For what pairs} \ (n,2d) \ \text{is} \ {\Delta}_{n,2d} = \emptyset \ ? %S\mathcal{P}^e_{n,2d} = S\Sigma^e_{n,2d} \ ?
\nonumber \end{equation}
\vspace{-0.2cm}

The current answers to this question in the literature are ${\Delta}_{n,2d} = \emptyset$ if $n=2, d=1, (n,2d)=(3,4)$ by Hilbert's Theorem, $(n,2d)=(3,8)$ due to Harris \cite{Har}, and $(n,2d)=(n,4)_{n \geq 4}$. The result ${\Delta}_{n,4} = \emptyset$ for $n \geq 4$ was attributed to Choi, Lam and Reznick in \cite{Har}; a proof can be found in \cite[Proposition 4.1]{Goel}. Further, ${\Delta}_{n,2d} \neq \emptyset$ for $(n,2d)=(n,6)_{n \geq 3}$ due to Choi, Lam and Reznick \cite{CLR-3}, for $(n,2d)=(3,10), (4,8)$ due to Harris \cite{Har-2} and for $(n,2d)=(3,6)$ due to Robinson  \cite{Rob-1}. Robinson's even symmetric psd not sos ternary sextic is the form
\[
R(x,y,z):= x^6+y^6+z^6-(x^4y^2+y^4z^2+z^4x^2+x^2y^4+y^2z^4+z^2x^4)+3x^2y^2z^2.
\]
Thus the answer to $\mathcal{Q}(S^e)$ in the literature can be summarized by the following chart:
\vspace{0.1cm}

\begin{center}
    \begin{tabular}{ | l | l | l | l | l | p{0.6cm} }
    \hline
    deg $\setminus$ var & 2 & 3 & 4 & 5 & \ $\ldots$ \\ \hline
    2 & $\checkmark$  & $\checkmark$ & $\checkmark$ & $\checkmark$ & \ $\ldots$ \\ \hline
    4& $\checkmark$ & $\checkmark$ & $\checkmark$ & $\checkmark$ & \ $\ldots$ \\ \hline
    6 & $\checkmark$ & $\times$& $\times$ &$\times$ & \ $\ldots$  \\ \hline
      8 & $\checkmark$ & $\checkmark$ & $\times$ & o  &  o \\ \hline
       10 & $\checkmark$ & $\times$ &o & o & o  \\ \hline
              12 & $\checkmark$ & o & o & o & o  \\ \hline
                14 & $\checkmark$ & o & o & o & o  \\ \hline
        $\vdots$ &  $\vdots$ & o & o & o & o \\
    \end{tabular}
\end{center}
\vspace{0.2cm}

 \noindent where, a tick ($\checkmark$)  denotes a positive answer to $\mathcal{Q}(S^e)$, a cross ($\times$) denotes a negative answer to $\mathcal{Q}(S^e)$, and a circle (o) denotes  \textquotedblleft undetermined\textquotedblright. Indeed to get a complete answer to $\mathcal{Q}(S^e)$, we need to investigate the question in these remaining cases, namely $(n,8)$ for $n \geq 5$,  $(3,2d)$ for $d \geq 6$ and $(n,2d)$ for $n \geq4, d \geq 5$.
\vspace{0.1cm}

\noindent \textbf{Main Theorem.} An even symmetric $n$-ary $2d$-ic psd form is sos if and only if $n=2$ or $d=1$ or $(n,2d)=(n,4)_{n \geq 3}$ or $(n,2d)= (3,8)$.
\vspace{0.1cm}

In other words, every \textquotedblleft o\textquotedblright \ in the chart can be replaced by \textquotedblleft $\times$\textquotedblright.
\vspace{0.1cm}

The article is structured as follows.
In Section \ref{sec:Hilbert for even symm}, we develop the tools (Theorem \ref{degJUMP} and Theorem \ref{reducedCases}) we need to prove our Main Theorem. These tools allow us to reduce to certain basic cases, in the same spirit as Hilbert and Choi-Lam. In Section \ref{sec:PNSsymmOctics} and Section \ref{sec:PNSsymmDecicsDodecics} we resolve those basic cases by producing explicit examples for $(n,2d); n \geq 4, d=4,5,6$. We conclude Section \ref{sec:PNSsymmDecicsDodecics} by interpreting even symmetric psd forms in terms of preorderings using our Main Theorem.
Finally, for ease of reference we summarize %all the %these 
our examples %discussed in this paper 
in Section \ref{sec:Glossary}. 

\section{Reduction to basic cases}
\label{sec:Hilbert for even symm}

The following Lemma will be used in Theorem \ref{degJUMP}.

\begin{lem} \label{pn,qn irred} For $n \ge 3$, the even symmetric real forms
\[
\begin{gathered}
p_n(x_1,\dots,x_n) = 4 \sum_{j=1}^n x_j^4 - 17
 \sum_{1 \le i < j \le n} x_i^2x_j^2;  \\
q_n(x_1,\dots,x_n) = \sum_{j=1}^n x_j^6 + 3 \sum_{1 \le i \neq j \le n} x_i^4x_j^2 - 100 \sum_{1 \le i < j < k \le n} x_i^2x_j^2 x_k^2
\end{gathered}
\]
are irreducible over $\mathbb R$.
\end{lem}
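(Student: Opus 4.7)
The plan is to assume a non-trivial factorization $p_n = fg$ (resp.\ $q_n = fg$) in $\mathbb R[x_1,\ldots,x_n]$ and derive a contradiction in two phases: first reduce to the case $f,g \in \mathbb R[x_1^2,\ldots,x_n^2]$, then rule out the reduced factorization by passing to the variables $y_i := x_i^2$. All reductions exploit the $(\mathbb Z/2\mathbb Z)^n$-action on $\mathbb R[x_1,\ldots,x_n]$ by sign changes $\sigma_i: x_i \mapsto -x_i$, under which both forms are invariant. A linear factor is ruled out at once: the $\sigma_i$-action forces any linear factor to be a scalar multiple of some $x_i$, but neither $p_n|_{x_i = 0} = p_{n-1}$ nor $q_n|_{x_i = 0} = q_{n-1}$ is zero.

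For same-degree factorizations --- $(2,2)$ for $p_n$ and $(3,3)$ for $q_n$ --- $\sigma_1$ either fixes $f,g$ up to sign (``eigenvector'' subcase) or swaps them (``swap'' subcase); for the $(2,4)$ split of $q_n$, only the first possibility arises because the degrees differ. In the eigenvector subcase, inspecting a single coefficient in $x_1$ suffices: for $p_n$, both factors odd in $x_1$ kill the $x_1^4$-term (whose coefficient is $4$); for $q_n$ in the $(3,3)$ split, both even violates $\deg_{x_1} f = 3$ (forced by $\deg_{x_1} q_n = 6$), while both odd kill the $x_1^0$-term (which equals $q_{n-1} \not\equiv 0$); for the $(2,4)$ split of $q_n$ the odd eigenvectors violate $\deg_{x_1} f = 2$. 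In the swap subcase $\sigma_1 f = \mu g$, decomposing $f = A + B$ into its $x_1$-even and $x_1$-odd parts gives $\mu \cdot p_n = A^2 - B^2$ (resp.\ $\mu \cdot q_n = A^2 - B^2$). The top $x_1$-coefficient fixes the sign of $\mu$: positive for $p_n$, negative for $q_n$. The $x_1^0$-coefficient then becomes a polynomial identity $A_0^2 = \mu \cdot p_{n-1}$ (resp.\ $A_0^2 = \mu \cdot q_{n-1}$) in $x_2,\ldots,x_n$, which I would contradict by evaluation: for $p_n$ at $(1,1,0,\ldots,0)$ one gets $p_{n-1} = -9$ with $\mu > 0$, so a non-negative square equals a negative number; for $q_n$ at $(1,0,\ldots,0)$ one gets $q_{n-1} = 1$ with $\mu < 0$, the same contradiction.

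It then remains to contradict the reduced factorizations $P_n(y) := 4\sum y_j^2 - 17\sum_{i<j} y_i y_j = FG$ and $Q_n(y) := \sum y_j^3 + 3\sum_{i \ne j} y_i^2 y_j - 100\sum_{i<j<k} y_i y_j y_k = FG$ in $\mathbb R[y_1,\ldots,y_n]$ for $n \ge 3$. The Gram matrix of $P_n$ is $\tfrac{25}{2} I - \tfrac{17}{2} J$, with eigenvalues $\tfrac{25}{2}$ (multiplicity $n-1$) and $\tfrac{25 - 17n}{2}$; so $P_n$ has full rank $n \ge 3$ and hence cannot factor into two real linear forms. For $Q_n$ (degree $3$), the only non-trivial type is (linear)$\cdot$(quadratic); since the factor degrees differ, the $S_n$-action on $\mathbb R[y]$ can only rescale each factor, so the linear factor $F$ lies in a character-isotypic line of the permutation representation $\bigoplus \mathbb R y_i$. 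For $n \ge 3$ this representation decomposes as trivial $\oplus$ standard and contains no sign-isotypic line, forcing $F = c(y_1 + \cdots + y_n)$; but at $(y_1,\ldots,y_n) = (-2,1,1,0,\ldots,0)$ one has $y_1 + \cdots + y_n = 0$ while $Q_n = -6 + 18 + 200 = 212 \ne 0$, ruling this out. The hardest step will be the swap subcase of the equal-degree factorizations, where one must correctly read off the sign of $\mu$ from the leading $x_1$-coefficient and then locate a test point on which the residual $(n-1)$-variable form takes a value of the opposite sign.
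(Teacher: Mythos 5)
Your overall architecture (reduce to factors lying in $\mathbb R[x_1^2,\dots,x_n^2]$ via the sign-change group, then pass to $y_i=x_i^2$ and kill the low-degree forms $P_n$, $Q_n$) is a legitimate alternative to the paper's route, which instead restricts to $n=3$ and counts the "cousins" $f(\pm x_{\sigma(1)},\dots,\pm x_{\sigma(n)})$ of an irreducible factor. Several of your individual steps are correct (the swap subcase with $\mu p_n=A^2-B^2$ and the sign-of-$\mu$ contradiction, the rank argument for $P_n$, the test points). But your very first step contains a genuine error. The $(\mathbb Z/2\mathbb Z)^n$-action does \emph{not} force a linear factor of an even form to be a multiple of some $x_i$: it only forces the whole orbit $\ell(\pm x_1,\dots,\pm x_n)$ to consist of factors, and since pairwise non-proportional linear forms are coprime, the product of the orbit (of size $2^{m-1}$ when $\ell$ has $m$ nonzero coefficients) must divide the form; for a quartic this permits $m\le 3$. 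The binary restriction of $p_n$ is itself a counterexample to your claim: $4x_1^4-17x_1^2x_2^2+4x_2^4=(x_1+2x_2)(x_1-2x_2)(2x_1+x_2)(2x_1-x_2)$ is even in each variable yet splits into linear factors, none of which is a monomial. Ruling out a linear factor of $p_3$ with three nonzero coefficients genuinely requires the specific coefficients $4$ and $-17$; this is exactly where the paper works (the ratios $\alpha_2/\alpha_1,\alpha_3/\alpha_2,\alpha_3/\alpha_1$ would all have to lie in $\{\pm 1/2,\pm 2\}$, which is inconsistent). Your sketch replaces this unavoidable computation with a false general principle. (For $q_n$ the linear case really is easy, but for a different reason: $q_n(x_1,x_2,0,\dots,0)=(x_1^2+x_2^2)^3$ has no real linear factor.)

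A second, smaller gap: in the $(2,4)$ split of $q_n$ you assert that $\sigma_1$ must fix the quadratic factor up to scalar "because the degrees differ". This fails when the quartic cofactor is itself a product of two irreducible quadratics (the $(2,2,2)$ type, which your case list omits): $\sigma_1$ may then permute the three quadratic classes. The same issue recurs in the reduced problem for $Q_n$, where the identification $F=c(y_1+\cdots+y_n)$ presupposes that the linear factor is unique up to scalar, i.e., that $Q_n$ is not a product of three linear forms. Both points are patchable (an involution, or more generally a $2$-group, acting on a set of three proportionality classes always fixes a class, and the fixed class yields a factor that is sign-invariant and hence even in every variable), but as written the case analysis is incomplete. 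So the proposal is not yet a proof: the linear-factor step for $p_n$ needs the coefficient-specific argument, and the reducible-cofactor cases for $q_n$ and $Q_n$ need to be added.
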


\begin{proof} First observe that if a form $g$ has a factorization
\[
g(x_1,\dots,x_n) = \prod_{r=1}^u f_r(x_1,\dots,x_n),
\]
then the same holds when $x_{k+1} = \dots = x_n =
0$, hence it suffices to show that $p_3$ and $q_3$ are irreducible
over $\mathbb R$. Second, observe that if (in addition) $g$ is even and
symmetric, then for all $\sigma \in S_n$ and choices of sign,
$f_r(\pm_1 x_{\sigma_1},\dots,\pm_n x_{\sigma_n})$ is also a factor of $g$. We call distinct
(non-proportional) forms of this kind {\it cousins} of $f_r$. If (in
addition) $f_r$ is irreducible, $\deg f_r = d$ and  $\deg g = n$, then
$f_r$ can have at most $n/d$ cousins.

If $p_3$ is reducible, then it has a factor of degree $\le 2$.
Suppose that $p_3$ has a linear factor $\alpha_1 x_1 + \alpha_2 x_2 + \alpha_3
x_3$. Upon setting $x_3=0$, we see that
\[
\alpha_1 x_1 + \alpha_2 x_2\ |\ 4x_1^4 -17x_1^2 x_2^2  + 4x_2^4 =
(x_1+2x_2)(x_1-2x_2)(2x_1+x_2)(2x_1-x_2),
\]
so $\alpha_2/\alpha_1 \in \{\pm 1/2, \pm 2\}$. Similarly,
$\alpha_3/\alpha_2 \in \{\pm 1/2, \pm 2\}$, so $\alpha_3/\alpha_1 \in \{\pm
1/4, \pm 1, \pm 4\}$, which contradicts $\alpha_3/\alpha_1 \in \{\pm 1/2,
\pm 2\}$. It follows that $p_3$ has no linear factors.

Suppose $p_3$ has a quadratic (irreducible) factor $f = \alpha_1 x_1^2 + \alpha_2 x_2^2 + \alpha_3
x_3^2 + \dots$. If it is not true that $\alpha_1 = \alpha_2 = \alpha_3$, then
by permuting variables, $f$ has at least $3 > 4/2$ cousins. Thus
$\alpha_1 = \alpha_2 = \alpha_3$, and by scaling we may assume the common value
is 2. The binary quartic $4x_1^4 -17x_1^2 x_2^2 + 4x_2^4$ has six quadratic
factors, found by taking pairs of linear factors as above. Of these, the ones
in which $\alpha_1=\alpha_2$ are $2x_1^2 \pm 5 x_1x_2 + 2x_2^2$. It follows
that, more generally, the coefficient of $x_ix_j$ is $\pm 5$ and
that
\[
f(x_1,x_2,x_3) = 2x_1^2 + 2x_2^2 + 2x_3^2 + \pm_{12} (5 x_1x_2) +
\pm_{13} (5 x_1x_3) + \pm_{23} (5 x_2x_3).
\]
Regardless of the initial choice of signs, making the single sign
changes $x_i \mapsto -x_i$ for $i=1,2,3$ shows that $f$ has 4 cousins,
which again is too many. Therefore, we may conclude that $p_3$ is
irreducible.

We turn to $q_3$ and first observe that
\[
q_3(x_1,x_2,x_3) = (x_1^2 + x_2^2 + x_3^2)^3 -
106 x_1^2x_2^2x_3^2.
\]
Suppose now that $q_3$ is reducible, so it has at least one factor of degree $\le 3$, and let $f$ be such a factor of $q_3$.
Once again, we set $x_3 = 0$ and observe that
\[
f(x_1,x_2,0)\ |\ q_3(x_1,x_2,0) = (x_1^2 + x_2^2)^3.
\]
Since $x_1^2 + x_2^2$ is irreducible over $\mathbb R$, we conclude
that $\deg f = 2$ and $f(x_1,x_2) = \lambda(x_1^2 + x_2^2)$. Writing
\[
f(x_1,x_2,x_3) =  \alpha_1 x_1^2 + \alpha_2 x_2^2 + \alpha_3 x_3^2 +
\sum_{1  \le i < j \le 3} \beta_{ij}x_ix_j,
\]
we see from the foregoing that $\alpha_1 = \alpha_2$ and $\beta_{12} = 0$. By
setting $x_2=0$ and $x_1=0$ in turn, we see that the $\alpha_i$'s are
equal and $\beta_{ij} = 0$, so $f$ is a multiple of $x_1^2 + x_2^2 +
x_3^2$. But since $ x_1^2x_2^2x_3^2$ is not a multiple of  $x_1^2 + x_2^2 +
x_3^2$, $f$ cannot divide $q_3$, completing the proof.
\end{proof}

  \begin{lem}  \label{irred indef preserves non sosness} Let $f $  be a psd not sos $n$-ary $2d$-ic form and $p$ an irreducible indefinite %linear
form of degree $r$ in $\mathbb{R}[x_1, \ldots, x_n]$. Then the $n$-ary $(2d+2r)$-ic form $p^2f$ is also psd not sos.
\end{lem}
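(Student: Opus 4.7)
The statement that $p^2 f$ is psd is immediate, since $p^2 \ge 0$ and $f \ge 0$ pointwise on $\mathbb{R}^n$. The substance lies in showing that $p^2 f$ is not sos, and I would argue this by contradiction. Suppose $p^2 f = \sum_{j=1}^k h_j^2$ for some real forms $h_j \in \mathbb{R}[x_1,\ldots,x_n]$. The plan is to show that $p$ divides each $h_j$ in $\mathbb{R}[x_1,\ldots,x_n]$; granting this, writing $h_j = p\,g_j$ yields $p^2 f = p^2 \sum_j g_j^2$, and cancelling the nonzero factor $p^2$ gives $f = \sum_j g_j^2$, contradicting the assumption that $f$ is not sos.

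To prove $p \mid h_j$, I would first evaluate the identity $p^2 f = \sum_j h_j^2$ at any real zero $c$ of $p$; this forces $\sum_j h_j(c)^2 = 0$, hence $h_j(c)=0$ for all $j$. So each $h_j$ vanishes on the real zero set $V_{\mathbb{R}}(p)$. The remaining task is to upgrade vanishing on $V_{\mathbb{R}}(p)$ to divisibility by $p$ in $\mathbb{R}[x_1,\ldots,x_n]$, a special case of the real Nullstellensatz that I would handle by passing to the complex setting via two observations. First, because $p$ is $\mathbb{R}$-irreducible and indefinite, $p$ is also irreducible over $\mathbb{C}$: otherwise the only factorization pattern compatible with $\mathbb{R}$-irreducibility would be $p = c\, q \bar q$ for some $q \in \mathbb{C}[x]$, making $p$ a scalar multiple of $|q|^2$ and hence semidefinite. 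Second, because $p$ is indefinite, a generic line segment from a point where $p>0$ to a point where $p<0$ restricts $p$ to a univariate polynomial with a simple real root, producing a nonsingular point $c \in V_{\mathbb{R}}(p)$; near $c$ the real zero set is an $(n-1)$-dimensional real-analytic submanifold of $\mathbb{R}^n$.

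Combining these, the complex Zariski closure of $V_{\mathbb{R}}(p)$ has complex dimension at least $n-1$ (since it contains a real submanifold of that real dimension), and is contained in the irreducible $(n-1)$-dimensional variety $V_{\mathbb{C}}(p)$; therefore the two coincide. Consequently each $h_j$ vanishes on all of $V_{\mathbb{C}}(p)$, and by Hilbert's Nullstellensatz applied to the irreducible $p$ we obtain $p \mid h_j$ in $\mathbb{C}[x_1,\ldots,x_n]$, hence in $\mathbb{R}[x_1,\ldots,x_n]$, as desired.

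The main obstacle is the passage from "$h_j$ vanishes on $V_{\mathbb{R}}(p)$" to "$p \mid h_j$": it is here that the full strength of the hypotheses enters, with indefiniteness used once to guarantee a nonsingular real zero (to give the right real dimension) and once again to preclude a factorization of $p$ over $\mathbb{C}$ into complex-conjugate pieces. Without either ingredient, the argument collapses; for instance, $p = x_1^2 + \cdots + x_n^2$ is irreducible over $\mathbb{R}$ but not over $\mathbb{C}$, and $p^2 f$ can indeed be sos even when $f$ is not.
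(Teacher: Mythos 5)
Your proof is correct and follows the same route as the paper, which at this point simply cites \cite{G-K-R-1} (Lemma 2.1): each $h_j$ vanishes on $V_{\mathbb{R}}(p)$, and irreducibility together with the sign change of $p$ forces $p \mid h_j$, so the factor $p^2$ cancels and $f$ would be sos. The one step you assert rather than prove --- that a generic segment joining a point with $p>0$ to one with $p<0$ gives a \emph{simple} real root of the restriction of $p$ --- is fine because $p$ is reduced, so the discriminant of $p$ restricted to a line does not vanish identically as the endpoints vary over small balls.
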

\begin{proof} See \cite[Lemma 2.1]{G-K-R-1}.
\end{proof}

\begin{thm}
\label{degJUMP} (\textbf{Degree Jumping Principle})
Suppose $f\in {\Delta}_{n,2d}$ for $n \geq 3$, then
\begin{enumerate}[{1.}]
\item for any integer $r \ge 2$, the form $f p_{n}^{2a} q_{n}^{2b} \in {\Delta}_{n,2d+4r}$, where $r=2a+3b$; $a, b \in \mathbb{Z}_{+}$, and $p_n, q_n$ are as defined in Lemma \ref{pn,qn irred}; 
\item $(x_{1} \ldots  x_n)^{2}f \in {\Delta}_{n,2d+2n}$. \end{enumerate}
\end{thm}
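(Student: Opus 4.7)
The strategy is a straightforward iteration of Lemma~\ref{irred indef preserves non sosness}. Since Lemma~\ref{pn,qn irred} already supplies irreducibility of $p_n$ and $q_n$, the only preparatory step is to verify indefiniteness: evaluating at standard basis vectors gives $p_n(1,0,\ldots,0) = 4$ and $p_n(1,1,0,\ldots,0) = 8 - 17 = -9$, so $p_n$ is indefinite, and similarly $q_n(1,0,\ldots,0) = 1$ while $q_n(1,1,1,0,\ldots,0) = 3 + 18 - 100 = -79$, so $q_n$ is indefinite.

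For part (1), I would then apply Lemma~\ref{irred indef preserves non sosness} a total of $a+b$ times, $a$ of them with $p = p_n$ and $b$ of them with $p = q_n$. Each application preserves the psd-not-sos property, yielding $f\,p_n^{2a}q_n^{2b}$ psd and not sos; its degree is $2d + 8a + 12b = 2d + 4(2a+3b) = 2d + 4r$, as required. Even symmetry is preserved at every step because $p_n$ and $q_n$ are themselves even symmetric, so the end product lies in $\Delta_{n,2d+4r}$.

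For part (2), each linear form $x_i$ is automatically irreducible and indefinite. Iterating Lemma~\ref{irred indef preserves non sosness} with $p = x_1, x_2, \ldots, x_n$ in succession produces the chain $f,\ x_1^2 f,\ x_1^2 x_2^2 f,\ \ldots,\ (x_1\cdots x_n)^2 f$, each term psd and not sos. The intermediate forms are not symmetric, but Lemma~\ref{irred indef preserves non sosness} does not impose that hypothesis; at the end, $(x_1\cdots x_n)^2 f$ is even symmetric because both $(x_1\cdots x_n)^2$ and $f$ are, placing it in $\Delta_{n,2d+2n}$. Since the substantive work has already been done in Lemmas~\ref{pn,qn irred} and~\ref{irred indef preserves non sosness}, no genuine obstacle remains---the proof is essentially bookkeeping, combining irreducibility, indefiniteness, and a short degree count.
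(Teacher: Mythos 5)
Your proposal is correct and follows essentially the same route as the paper: verify (or invoke) irreducibility and indefiniteness of $p_n$ and $q_n$, then iterate Lemma~\ref{irred indef preserves non sosness} the appropriate number of times, with even symmetry and the degree count checked at the end. Your explicit numerical verification of indefiniteness and your remark that the intermediate forms in part (2) need not be symmetric are small but welcome additions that the paper leaves implicit.
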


 \begin{proof}

 \begin{enumerate} [{1.}]
\item For $r  \in \mathbb{Z}_{+}, r  \geq 2$, there exists non-negative $a, b \in \mathbb{Z}$ such that $r=2a+3b$.
\noindent   Since $f p_{n}^{2a} q_{n}^{2b}$ is a product of even symmetric forms, it is even and symmetric; since it is a product of psd forms, it is psd. Thus we have $f p_{n}^{2a} q_{n}^{2b} \in S\mathcal{P}^{e}_{n,2d+4r}$.
\noindent Since $p_n$ and $q_n$ are indefinite and irreducible forms by Lemma \ref{pn,qn irred}, we get $fp_{n}^2 \in {\Delta}_{n,2d+8}$ and $fq_{n}^2 \in {\Delta}_{n,2d+12}$ by Lemma \ref{irred indef preserves non sosness}.
\noindent Finally, by repeating this argument we get $f p_{n}^{2a} q_{n}^{2b} \in \Delta_{n,2d+4r}$.  \vspace{0.1cm}

\item Taking $p$ $=x_{i}$ in turn for each $1 \leq i \leq n$, the assertion follows by Lemma  \ref{irred indef preserves non sosness}.
\end{enumerate}
\end{proof}

 \begin{thm}
\label{reducedCases} \textbf{(Reduction to Basic Cases)} If ${\Delta}_{n,2d} \neq \emptyset$ for $(n,8)_{n \geq 4}, (n,10)_{n \geq 3}$ and $(n,12)_{n \geq 3}$, then ${\Delta}_{n,2d} \neq \emptyset$ for $(n,2d)_{n \geq 3, d \geq 7}$.
\end{thm}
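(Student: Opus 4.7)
The plan is to combine the Degree Jumping Principle (Theorem \ref{degJUMP}) with the three hypothesized base cases $\Delta_{n,8}$ ($n\geq 4$), $\Delta_{n,10}$ ($n\geq 3$), $\Delta_{n,12}$ ($n\geq 3$), together with the already-known $\Delta_{n,6}\neq\emptyset$ for all $n\geq 3$ (Robinson \cite{Rob-1} and Choi--Lam--Reznick \cite{CLR-3}, as recorded in the introduction). Part (1) of Theorem \ref{degJUMP} shifts the degree by $4r$ for any integer $r\geq 2$, i.e.\ by $8,12,16,20,\ldots$, so a single application starting from a suitable base $2d_0\in\{6,8,10,12\}$ should reach each target $(n,2d)$ with $d\geq 7$.

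I would split the targets $2d\geq 14$ by residue modulo $4$. When $2d\equiv 2\pmod 4$, the base $2d_0=10$ handles every $2d\geq 18$ via $r=(2d-10)/4\geq 2$, while the base $2d_0=6$ handles $2d=14$ via $r=2$. When $2d\equiv 0\pmod 4$, the base $2d_0=12$ handles every $2d\geq 20$ via $r=(2d-12)/4\geq 2$, and for $n\geq 4$ the base $2d_0=8$ handles $2d=16$ via $r=2$. In each instance the jumping witness is explicit: take any $f\in\Delta_{n,2d_0}$ and form $f\cdot p_n^{2a}q_n^{2b}$ with $2a+3b=r$, which by part (1) of Theorem \ref{degJUMP} lies in $\Delta_{n,2d}$.

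The only residual case is $(n,2d)=(3,16)$: here $\Delta_{3,8}=\emptyset$ by Harris blocks the use of base $8$, and no $4r$-jump with $r\geq 2$ reaches $16$ from any of $6,10,12$. For this case I would invoke part (2) of Theorem \ref{degJUMP}: for $f\in\Delta_{3,10}$ the form $(x_1x_2x_3)^2 f$ lies in $\Delta_{3,10+2\cdot 3}=\Delta_{3,16}$. This exceptional substitution of the $2n$-jump for the $4r$-jump is the only subtlety in the argument, and is what I expect to be the main (though mild) obstacle to a fully uniform presentation of the reduction.
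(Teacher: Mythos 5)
Your proof is correct and rests on the same tools as the paper's — the Degree Jumping Principle applied to the base degrees $6,8,10,12$ (with $\Delta_{n,6}\neq\emptyset$ imported from the literature) — differing only in bookkeeping: the paper covers $n\geq 4$ by jumps of $8k$ from the four bases and covers $n=3$ entirely by repeated $2n$-jumps (part (2)) from bases $6$, $10$, and $14$ (the last manufactured as $p_3^2R\in\Delta_{3,14}$), whereas you use $4r$-jumps organized by residue mod $4$ and reserve part (2) for the single case $(3,16)$. Both arguments correctly recognize that the absence of a degree-$8$ base for $n=3$ is the one point requiring special treatment, so your version is a valid, essentially equivalent presentation.
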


\begin{proof} For $n=3$, the basic examples are $R(x,y,z)  \in {\Delta}_{3,6}$ (by Robinson \cite{Rob-1}), several examples in ${\Delta}_{3,10}$ (by Harris \cite{Har}) and $p_{3}^{2} R(x,y,z) \in {\Delta}_{3,14}$ (by Theorem \ref{degJUMP} (1)). Every even integer $\geq 12$  can be written as $6+6k, 10+ 6k$ or $14+6k$, $k \geq 0$, and so by Theorem \ref{degJUMP} (2), ${\Delta}_{3,2d}$ is non-empty for $2d \geq 6$, $2d \neq 8$.

For $n \geq 4$, ${\Delta}_{n,6} \neq \emptyset$ (by Choi, Lam, Reznick \cite{CLR-3}). We shall show in Sections  \ref{sec:PNSsymmOctics} and  \ref{sec:PNSsymmDecicsDodecics} that ${\Delta}_{n,8}, {\Delta}_{n,10}, {\Delta}_{n,12}$ are non-empty. Every even integer $\geq 14$  can be written as $6+8k, 8+8k,  10+ 8k$ or $12+8k$ and so, given our claimed examples, by Theorem \ref{degJUMP}, ${\Delta}_{n,2d}$ is non-empty for $n \geq 4$, $2d \geq 6$.
\end{proof}

   In order to find psd not sos even symmetric $n$-ary octics, psd not sos even symmetric $n$-ary decics and psd not sos even symmetric $n$-ary dodecics for $n \geq 4$, we first recall the following theorems which will be particularly useful in proving the main results of Sections  \ref{sec:PNSsymmOctics} and \ref{sec:PNSsymmDecicsDodecics}. 

\begin{thm} \label{lem:SquaresinEvenFormReven} Suppose $p= \displaystyle \sum_{i=1}^{r} h^2_{i}$ is an even sos form. Then we may write $p=\displaystyle \sum_{j=1}^{s} q^2_{j}$, where each form $q^2_{j}$ is even. In particular, $q_{j}(\underline{x})= \displaystyle \sum c_j (\alpha) \underline{x}^{\alpha}$, where the sum is taken over $\alpha$'s in one congruence class mod 2 component-wise.
\end{thm}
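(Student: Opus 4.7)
The plan is to exploit the $(\mathbb{Z}/2\mathbb{Z})^n$-symmetry intrinsic to evenness: a form is even precisely when it is fixed by every sign-flip $\tau_\epsilon : x_i \mapsto (-1)^{\epsilon_i}x_i$ for $\epsilon \in \{0,1\}^n$. My strategy is to decompose each $h_i$ along the characters of this group and then to average the given sos representation of $p$ over the group, using that $p$ itself is invariant.

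First, for any form $h$ and any $\delta \in \{0,1\}^n$, let $h_\delta$ denote the component of $h$ consisting of those monomials $\underline{x}^\alpha$ with $\alpha \equiv \delta \pmod 2$ componentwise, so $h = \sum_{\delta \in \{0,1\}^n} h_\delta$. The sign-flip acts by a character: $\tau_\epsilon(h_\delta) = (-1)^{\epsilon \cdot \delta} h_\delta$. Expanding each square and using the orthogonality relation $\frac{1}{2^n}\sum_{\epsilon \in \{0,1\}^n} (-1)^{\epsilon \cdot \gamma} = [\gamma \equiv 0]$, I would compute
\[
\frac{1}{2^n}\sum_{\epsilon} \tau_\epsilon(h_i^2) \;=\; \frac{1}{2^n}\sum_{\epsilon}\sum_{\delta,\delta'} (-1)^{\epsilon \cdot (\delta+\delta')} h_{i,\delta}\, h_{i,\delta'} \;=\; \sum_{\delta} h_{i,\delta}^2,
\]
so the averaging procedure kills every cross-term.

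Second, since $p$ is even, $\tau_\epsilon(p) = p$ for all $\epsilon$, and summing the identity above over $i$ gives
\[
p \;=\; \frac{1}{2^n}\sum_{\epsilon} \tau_\epsilon(p) \;=\; \sum_{i=1}^r \frac{1}{2^n}\sum_{\epsilon}\tau_\epsilon(h_i^2) \;=\; \sum_{i=1}^r \sum_{\delta} h_{i,\delta}^2.
\]
Relabelling the nonzero forms $h_{i,\delta}$ as $q_1,\dots,q_s$ yields the asserted representation. Each $q_j$ supports only monomials in a single residue class $\delta_j \pmod 2$, so $q_j^2$ supports only monomials of exponent $\alpha + \alpha' \equiv 2\delta_j \equiv 0 \pmod 2$, i.e.\ $q_j^2$ is even, as required.

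There is no real obstacle: the argument is pure finite-group averaging (Reynolds operator for $(\mathbb{Z}/2\mathbb{Z})^n$ acting on $\mathbb R[x_1,\dots,x_n]$), and it does not require induction on degree, any positivity hypothesis beyond being sos, or separate treatment of the basic cases. The only small care-point is to ensure that the $h_{i,\delta}$ are forms (homogeneous), which is automatic because restricting the support of $h_i$ to a single mod-2 class preserves homogeneity.
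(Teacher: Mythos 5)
Your proof is correct. The paper does not prove this statement itself --- it simply cites \cite{CLR-3} (Theorem 4.1 there) --- and your argument is essentially the standard one behind that result: decompose each $h_i$ into its $2^n$ parity components $h_{i,\delta}$ and observe that only the diagonal products $h_{i,\delta}^2$ survive in the even part of $\sum_i h_i^2$; your $(\mathbb{Z}/2\mathbb{Z})^n$-averaging is just a packaged way of extracting that even part, which equals $p$ since $p$ is even.
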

\begin{proof} See \cite[Theorem 4.1]{CLR-3}.
\end{proof}

\begin{thm} \label{atmost 2 distinct comps} A symmetric $n$-ary quartic $f$ is psd if and only if $f(\underline{x}) \geq 0$ for every $\underline{x} \in \mathbb{R}^n$ with at most two distinct coordinates (if $n\geq4$). %, i.e. $\Lambda_{n,2}=\{ \underline{x} \in \mathbb{R}^n \ | \ x_i \in \{r,s\}; r \neq s \}$ is a test set for symmetric $n$-ary quartics.
\end{thm}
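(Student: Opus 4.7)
The forward direction is immediate: if $f$ is psd, then $f(\underline{x}) \geq 0$ for every $\underline{x}\in\mathbb{R}^n$, in particular those with at most two distinct coordinates. For the converse, I would argue by contradiction via a Lagrange-multiplier / critical-point analysis. Assume $f(\underline{x}) \geq 0$ for every $\underline{x}$ with at most two distinct coordinates, but $f$ is not psd. By homogeneity, let $\underline{x}^*$ minimize $f$ on the unit sphere $S^{n-1}$, so $f(\underline{x}^*) < 0$. The goal is to show that $\underline{x}^*$ has at most two distinct coordinate values, contradicting the hypothesis.

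Write $f = F(p_1, p_2, p_3, p_4)$ with $p_k = \sum_i x_i^k$. For $n \geq 4$, the set $\{p_4,\,p_3 p_1,\,p_2^2,\,p_2 p_1^2,\,p_1^4\}$ is a basis for the space of symmetric quartic forms, so $F$ is linear in $p_4$; write $F = \alpha p_4 + G(p_1, p_2, p_3)$. Using $\partial_i p_k = k x_i^{k-1}$, the Lagrange condition $\nabla f(\underline{x}^*) = 2\mu\,\underline{x}^*$ forces every coordinate $x_i^*$ to be a root of the common cubic
\[
4\alpha\, t^3 + 3 G_3\, t^2 + (2 G_2 - 2\mu)\, t + G_1 = 0,
\]
where $G_k := (\partial G/\partial p_k)|_{\underline{x}^*}$. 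Hence $\underline{x}^*$ takes at most three distinct coordinate values. If at most two, then the hypothesis gives $f(\underline{x}^*) \geq 0$, contradicting $f(\underline{x}^*) < 0$.

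The main obstacle is the remaining case of exactly three distinct values $u_1 < u_2 < u_3$ with multiplicities $m_1, m_2, m_3$ summing to $n \geq 4$. I would address this by a Timofte-style deformation inside the 3-dimensional subspace $W \subset \mathbb{R}^n$ of vectors respecting this multiplicity partition. On the 2-dimensional ellipsoid $W \cap S^{n-1}$, the level curve of $p_1$ through $\underline{x}^*$ is 1-dimensional, and along it both $p_1$ and $p_2$ are constant, so $f$ restricts to $\alpha p_4 + \tilde{G}(p_3)$. Following this curve in a direction that does not increase $f$ until two of the $y_j$ coincide would yield a point in the two-distinct-coordinate test set whose $f$-value is $\leq f(\underline{x}^*) < 0$, contradicting the hypothesis. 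The delicate technical step — confirming that this deformation terminates at a coincidence rather than circling back within the open region of three distinct values — requires a secondary analysis: critical points of $f$ along the curve again impose a cubic relation on $(y_1, y_2, y_3)$, and iterating forces the desired collision. Alternatively, one may invoke Timofte's half-degree principle, whose quartic case is precisely this statement.
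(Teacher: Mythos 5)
The paper does not actually prove this theorem; it only cites \cite{CLR-2}, \cite{Goel} and \cite{Har}, so your proposal must stand on its own. The easy direction and the first step of the converse are fine: writing $f=\alpha p_4+G(p_1,p_2,p_3)$ (valid since the products $p_\lambda$ over partitions $\lambda$ of $4$ form a basis for $n\ge 4$) and applying the Lagrange condition at a minimizer on the sphere does force every coordinate to be a root of a single cubic, hence at most three distinct values (you should add a word about the degenerate case where that cubic vanishes identically, e.g.\ $\alpha=0$ with $G_1=G_3=0$, where the first-order condition gives no information). The genuine gap is exactly the case you flag: three distinct values. Your proposed deformation does not work as described. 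The level set of $p_1$ on the two-dimensional ellipsoid $W\cap S^{n-1}$ is (generically) a circle that may lie entirely inside the open stratum $u_1<u_2<u_3$, so ``following the curve until two of the $u_j$ coincide'' need never terminate; moreover $f$ is not constant along that circle (it equals $\alpha p_4+\beta c_1p_3+\mathrm{const}$, a nontrivial function of the point), and if you instead minimize $f$ over the circle, the Lagrange condition in the variables $u_1,u_2,u_3$ is again a cubic, $4\alpha t^3+3\beta c_1t^2=\mu_1+2\mu_2t$, so the ``iteration'' you invoke reproduces three distinct values instead of forcing a collision. Passing from three values to two is the entire content of the theorem for quartics and requires an additional idea (in the classical treatments, a second-order/Hessian analysis at the minimizer, or Timofte's flow construction preserving $p_1,p_2$).

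Your fallback of invoking Timofte's half-degree principle is logically legitimate --- the statement is precisely the quartic case of that principle --- but then your proof is a citation, which puts it on the same footing as the paper's own treatment (a reference to Choi--Lam--Reznick, Harris and Goel). As a self-contained argument, the proposal is incomplete at its decisive step.
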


\begin{proof} This was originally proved in \cite{CLR-2}; see \cite[Corollary 3.11]{Goel}, \cite[Section 2]{Har}.
\end{proof}

\begin{thm}
\label{thm:symmpsdnotsosquartics}
(i) For odd $2m+1 \geq 5$, the symmetric $2m+1$-ary quartic
\[
L_{2m+1}(\underline{x}):=\displaystyle m(m+1)\sum_{i<j}^{}(x_i-x_j)^4 - \bigg(\sum_{i<j}^{}(x_i-x_j)^2\bigg)^2
\]
 is psd not sos.

\noindent (ii) For $2m \geq 4$, the symmetric $2m$-ary quartic
\[
C_{2m}(x_1, \ldots, x_{2m}):= L_{2m+1}(x_1, \ldots, x_{2m}, 0)
\]
 is psd not sos.
\end{thm}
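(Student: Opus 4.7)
The plan is: first establish psd via Theorem~\ref{atmost 2 distinct comps}, and then exclude an sos decomposition by a vanishing argument at the real zero locus.

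\emph{Psd for (i).} For a symmetric $n$-ary quartic with $n=2m+1\ge 5$, Theorem~\ref{atmost 2 distinct comps} reduces the check to vectors $\underline x$ with at most two distinct coordinate values, say $k$ entries equal to $a$ and $n-k$ equal to $b$. Only pairs $i<j$ with indices in different groups contribute a nonzero $(x_i-x_j)^{2}=(a-b)^{2}$, and there are $k(n-k)$ of them, so
\[
L_{2m+1}(\underline x)=k(n-k)(a-b)^{4}\bigl[m(m+1)-k(n-k)\bigr].
\]
Since $k\mapsto k(n-k)$ on $\{0,\ldots,n\}$ is maximized at $k\in\{m,m+1\}$ with maximum $m(m+1)$, we obtain $L_{2m+1}\ge 0$, with equality exactly when $a=b$ or $\{k,n-k\}=\{m,m+1\}$. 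Hence for every partition $[n]=S\sqcup T$ with $|S|=m$, $|T|=m+1$, the two-dimensional subspace $V_{S,T}:=\{a\mathbf 1_{S}+b\mathbf 1_{T}:a,b\in\mathbb R\}$ lies in the real zero set of $L_{2m+1}$.

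\emph{Not sos for (i).} Suppose $L_{2m+1}=\sum_{\ell}h_{\ell}^{2}$, with $h_{\ell}(\underline x)=\underline x^{T}M_{\ell}\underline x$ for symmetric $M_{\ell}$. Each $h_{\ell}$ must vanish on every $V_{S,T}$, which gives three linear conditions per partition:
\[
\mathbf 1_{S}^{T}M_{\ell}\mathbf 1_{S}=\mathbf 1_{S}^{T}M_{\ell}\mathbf 1_{T}=\mathbf 1_{T}^{T}M_{\ell}\mathbf 1_{T}=0.
\]
Taking differences across partitions differing by a single transposition forces the row sums of every $M_{\ell}$ to vanish and yields $\mathbf 1_{S}^{T}M_{\ell}\mathbf 1_{S}=0$ for \emph{every} $S\subseteq[n]$ with $|S|\in\{m,m+1\}$. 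My strategy is then to symmetrize: averaging a putative sos over $S_{n}$ preserves $L_{2m+1}$ and leaves the sos property intact, so we may assume the family $\{h_{\ell}\}$ is $S_{n}$-stable and decompose each $h_{\ell}$ along the isotypic components of the $S_{n}$-action on $\mathrm{Sym}^{2}\mathbb R^{n}$. On each isotype the vanishing conditions are sharp enough that the total contribution to $\sum_{\ell}h_{\ell}^{2}$ cannot match the coefficient of a chosen monomial (for example, the coefficient of $x_{1}^{3}x_{2}$ in $L_{2m+1}$ equals $-4m(m-1)\ne 0$ for $m\ge 2$), delivering the contradiction.

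\emph{Part (ii) and the main obstacle.} Since $C_{2m}$ is the specialization of $L_{2m+1}$ to $x_{n}=0$, it is psd. Its non-trivial real zeros are exactly the intersections of the $V_{S,T}$ with the hyperplane $x_{n}=0$, namely the one-dimensional subspaces $\langle\mathbf 1_{S'}\rangle$ for $S'\subseteq[2m]$ with $|S'|\in\{m,m+1\}$. A putative decomposition $C_{2m}=\sum h_{\ell}^{2}$ forces each $h_{\ell}$ to vanish on these lines, and the same $S_{2m}$-averaging and coefficient-comparison argument excludes the decomposition. The principal obstacle throughout is combinatorial: the vanishing conditions do not force any individual $M_{\ell}$ to vanish, so the contradiction must come from combining all the constraints against the prescribed coefficient pattern of $L_{2m+1}$ (respectively $C_{2m}$); this is precisely where the $S_{n}$-isotypic decomposition of $\mathrm{Sym}^{2}\mathbb R^{n}$ provides the essential bookkeeping simplification.
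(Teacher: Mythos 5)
Your psd argument is correct and complete: the reduction via Theorem~\ref{atmost 2 distinct comps}, the identity $L_{2m+1}(\underline x)=k(n-k)(a-b)^4\bigl[m(m+1)-k(n-k)\bigr]$ on the test points, and the identification of the zero subspaces $V_{S,T}$ are all exactly right.

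The not-sos half, however, has a genuine gap: the contradiction is never actually derived. You correctly reduce to the linear conditions $\mathbf 1_{S}^{T}M_{\ell}\mathbf 1_{S}=0$ for every $S$ with $|S|\in\{m,m+1\}$ (together with the cross terms), but then only announce a ``strategy'' of symmetrizing and decomposing $\mathrm{Sym}^{2}\mathbb R^{n}$ into $S_n$-isotypic pieces so that the conditions are ``sharp enough'' to contradict the coefficient of $x_1^3x_2$; none of this is carried out, and that is precisely where the proof must happen. Moreover, your closing claim that ``the vanishing conditions do not force any individual $M_{\ell}$ to vanish'' is false, and it is what sends you toward the unnecessary machinery. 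Writing $q(S)=\mathbf 1_{S}^{T}M_{\ell}\mathbf 1_{S}$, the difference $q(S\cup\{k\})-q(S)=M_{kk}+2\sum_{i\in S}M_{ik}=0$ holds for every $m$-subset $S$ avoiding $k$; comparing two such $S$ differing in one element gives $M_{ik}=M_{jk}$ for all $i,j\neq k$, and symmetry of $M_{\ell}$ then makes all off-diagonal entries equal to a common value $c$. Hence $M_{kk}=-2mc$ for every $k$, and $q(S)=-m(m+1)c=0$ forces $c=0$, i.e.\ $M_{\ell}=0$. Every square vanishes identically, contradicting $L_{2m+1}\neq 0$; the identical computation using only the conditions $q(S')=0$ for $S'\subseteq[2m]$, $|S'|\in\{m,m+1\}$ (which you do extract) disposes of (ii). This direct argument is the one in the reference the paper cites for this theorem, and it is mirrored in the paper's own proof of Theorem~\ref{EvenSymmPSDnotSOSOctic} for the even analogue $D_{2m}$. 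A further minor point: in (ii) you assert that the nontrivial zeros of $C_{2m}$ are \emph{exactly} the listed lines; Theorem~\ref{atmost 2 distinct comps} controls positivity, not the zero locus, so ``exactly'' is unjustified --- though only the containment of those lines in the zero set is actually needed.
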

\begin{proof} See \cite[Theorems 2.8, 2.9]{G-K-R-1}.
\end{proof}

\begin{thm}
\label{CLR_quad} For an integer $r \geq 1$, let $M_r = M_r(x_1,\dots,x_n):= x_1^r + \cdots + x_n^r$. For reals $a, b, c$, the sextic $p=a M_2^3 +
bM_2M_4 + c M_6$ is psd if and only if $a t^2 + bt + c \ge 0$ for $t \in
\{1,2,\dots n\}$ and sos if and only if $a t^2 + bt + c \ge 0$ for $t \in \{1\} \cup [2,n]$.
\end{thm}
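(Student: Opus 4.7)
The plan is to handle the psd and sos equivalences separately, each split into necessity (via evaluation) and sufficiency (via explicit decomposition).

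\smallskip

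\emph{Psd equivalence.} For necessity, substituting $(x_1,\dots,x_n) = (1,\dots,1,0,\dots,0)$ with $t$ ones yields $M_2 = M_4 = M_6 = t$, so $p = t(at^2+bt+c)$, forcing $q(t) := at^2+bt+c \ge 0$ at each $t \in \{1,\dots,n\}$. For sufficiency, substitute $y_i = x_i^2 \ge 0$; then $p = a s_1^3 + b s_1 s_2 + c s_3$ with $s_r(y) = \sum_i y_i^r$, and by homogeneity it suffices to minimize $F(y) := a + b s_2 + c s_3$ on the simplex $\Delta = \{y \ge 0, \sum y_i = 1\}$. Lagrange multipliers on each open face of $\Delta$ force the positive coordinates of a critical point to be roots of the quadratic $3c y_i^2 + 2b y_i = \mu$, so they take at most two distinct values. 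In the one-value case (all equal to $1/k$ for some $k \in \{1,\dots,n\}$), $F = q(k)/k^2 \ge 0$. In the two-value case with multiplicities $(j,k)$, the configuration traces a one-parameter arc that passes through the one-value configurations with $k$, $j+k$, and $j$ positive coordinates; by linearity of $F$ in $(a,b,c)$, it suffices to verify $F \ge 0$ on this arc for each extreme ray of the polyhedral cone $\{q : q \ge 0 \text{ at } t = 1, \dots, n\}$, namely $(t-i)(t-i-1)$ for $i = 1, \dots, n-1$ and $-(t-1)(t-n)$, for each of which $F(s)$ admits an explicit non-negative factorization.

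\smallskip

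\emph{Sos equivalence.} By Theorem \ref{lem:SquaresinEvenFormReven}, any sos representation of $p$ refines to $p = \sum q_j^2$ where each cubic $q_j$ has monomials in a single congruence class mod $2$ coordinatewise. The only such cubics are either \emph{Type A}, $q_j = x_i(A x_i^2 + \sum_{k \neq i} B_k x_k^2)$ for some index $i$, or \emph{Type B}, $q_j = \gamma\, x_i x_j x_k$ for a distinct triple. After $S_n$-symmetrization, symmetric Type A templates realize every rank-$\le 1$ psd quadratic $(Bt + (A-B))^2$ as a contribution to $q(t)$; Type B templates contribute non-negative multiples of $(t-1)(t-2)$ via $e_3(y) = \tfrac{1}{6}(s_1^3 - 3 s_1 s_2 + 2 s_3)$; and the boundary form $r_n$ corresponding to $-(t-1)(t-n)$ is sos via an AM--GM identity generalizing, for $n=3$, $r_3 = \sum_{\mathrm{cyc}}(x_i^2 x_j - x_j x_k^2)^2$. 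Putting these together, $p$ is sos if and only if
\[
q(t) = q_0(t) + \lambda_1(t-1)(t-2) - \lambda_2(t-1)(t-n)
\]
for some univariate psd quadratic $q_0$ and $\lambda_1, \lambda_2 \ge 0$; a Markov--Luk\'acs-style argument identifies this cone of triples $(a,b,c)$ with $\{q : q \ge 0 \text{ on } \{1\} \cup [2,n]\}$.

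\smallskip

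The main obstacle is the arc computation in the two-value psd case (resolved by explicit factorization for each of the $n$ extreme generators) and identifying the Positivstellensatz for the non-interval set $\{1\} \cup [2,n]$ in the sos direction; both are carried out by elementary case analysis once the candidate decompositions are in hand.
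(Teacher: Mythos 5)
The paper does not prove this statement at all: it is quoted from Choi--Lam--Reznick (\emph{Even Symmetric Sextics}, Theorems 3.7 and 4.25), so there is no in-paper argument to match yours against. Judged on its own, your outline has the right architecture (evaluation at the points $v_t$ for necessity; a two-distinct-values reduction plus extreme-ray decomposition for psd sufficiency; the even-square classification for sos), but two steps are asserted rather than proved, and one of them hides the hardest part of the theorem. In the psd half, the reduction to the $n$ extreme rays $(t-i)(t-i-1)$ and $-(t-1)(t-n)$ is legitimate, and for $i=1$ and for $-(t-1)(t-n)$ the corresponding forms are globally nonnegative on the orthant by visible identities ($s_1^3-3s_1s_2+2s_3=6e_3(y)$ and $-s_1^3+(n+1)s_1s_2-ns_3=\sum_{i<j<k}\bigl(y_i(y_j-y_k)^2+y_j(y_k-y_i)^2+y_k(y_i-y_j)^2\bigr)$); but for $2\le i\le n-1$ the restriction of $s_1^3-(2i+1)s_1s_2+i(i+1)s_3$ to a two-value arc is a genuine family of binary cubics whose nonnegativity on the closed quadrant does not follow from its nonnegativity at the three points $[1{:}0],[0{:}1],[1{:}1]$; the ``explicit non-negative factorization'' you invoke is exactly the content that must be exhibited.

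The serious gap is in the necessity direction of the sos criterion. After symmetrizing, each $S_n$-orbit of squares does contribute a quadratic in $t$ (the space of even symmetric sextics is $3$-dimensional, spanned by $M_2^3,M_2M_4,M_6$), but your analysis only computes this contribution for the \emph{symmetric} Type A templates $x_i(Ax_i^2+B\sum_{k\ne i}x_k^2)$, which give rank-one psd quadratics. A general sos decomposition uses cubics $x_i\bigl(Ax_i^2+\sum_{k\ne i}B_kx_k^2\bigr)$ with unequal $B_k$, and the orbit average of such a square contributes, with $\beta=\sum_kB_k$ and $N=n-1$,
\[
\Bigl(A+\tfrac{\beta(t-1)}{N}\Bigr)^2\;+\;\frac{(t-1)(n-t)}{N(N-1)}\Bigl(\sum_kB_k^2-\tfrac{\beta^2}{N}\Bigr),
\]
i.e.\ a psd quadratic \emph{plus} a nonnegative multiple of $-(t-1)(t-n)$ (the second coefficient is nonnegative by Cauchy--Schwarz). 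Without this computation your ``putting these together'' does not close: as written, your classification of contributions (rank-one psd quadratics and multiples of $(t-1)(t-2)$ only) would force $q$ into the smaller cone generated by psd quadratics and $(t-1)(t-2)$, which contradicts your own correct observation that the form corresponding to $-(t-1)(t-n)$ is sos. The orbit-average identity above is the missing engine of the necessity direction; once it is in place (together with the elementary Markov--Luk\'acs-type description of the cone of quadratics nonnegative on $\{1\}\cup[2,n]$, which you correctly defer), the sos equivalence does follow.
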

\begin{proof} See \cite[Theorems 3.7, 4.25]{CLR-3}.
\end{proof}

\begin{obs} Let $v_t$ denote any $n$-tuple with $t$ components equal to 1 and $n-t$ components equal to zero. Then $M_r(v_t)=t$, so $p(v_t)=t (a t^2 + bt + c)$. 
It will be useful in the proofs of Theorems 3.1, 4.1 and 4.4  to let $v_t(a_1, \ldots, a_t)$ denote the particular $v_t$ with $1$'s in positions $a_1, \ldots, a_t$.
\end{obs}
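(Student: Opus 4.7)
The plan is a one-line direct computation followed by a straightforward interpretation of the notation, since the Observation is really a numerical evaluation rather than a theorem with content of its own. First I would evaluate each power sum at $v_t$: because $v_t$ has $t$ coordinates equal to $1$ and $n-t$ coordinates equal to $0$, the sum $M_r(v_t)=\sum_{j=1}^n (v_t)_j^r$ reduces to $t\cdot 1^r+(n-t)\cdot 0^r$. For every integer $r\ge 1$ this equals $t$, so in particular $M_2(v_t)=M_4(v_t)=M_6(v_t)=t$.

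Next I would substitute these values into the sextic $p=aM_2^3+bM_2M_4+cM_6$ from Theorem~\ref{CLR_quad} to obtain
\[
p(v_t)=a\,t^3+b\,t\cdot t+c\,t=t(at^2+bt+c),
\]
which is exactly the claimed formula. The closing sentence is purely a naming convention: $v_t(a_1,\ldots,a_t)$ denotes the specific $v_t$ whose nonzero coordinates sit in positions $a_1,\ldots,a_t$. This requires no proof; it just records the additional bookkeeping needed when several such tuples are compared simultaneously (as will occur in the proofs of Theorems~3.1, 4.1, and 4.4).

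There is no substantive obstacle here, as the argument is essentially a single arithmetic step. The only point worth explicitly flagging is the role of the hypothesis $r\ge 1$ built into the definition of $M_r$ in Theorem~\ref{CLR_quad}: without it, $0^r$ could equal $1$ and $M_r(v_t)$ would count all $n$ coordinates rather than just the $t$ nonzero ones. Since the Observation is only ever applied with $r\in\{2,4,6\}$, this condition is automatically satisfied, and the identities $M_r(v_t)=t$ and $p(v_t)=t(at^2+bt+c)$ are available for use throughout Sections~\ref{sec:PNSsymmOctics} and~\ref{sec:PNSsymmDecicsDodecics}.
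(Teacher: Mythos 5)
Your computation is correct and is exactly the (one-line) verification the paper's Observation tacitly relies on: $M_r(v_t)=t\cdot 1^r+(n-t)\cdot 0^r=t$ for $r\ge 1$, hence $p(v_t)=at^3+bt^2+ct=t(at^2+bt+c)$, with the final sentence being pure notation. No discrepancy with the paper's treatment.
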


%\begin{rk} The quadratic $a t^2 + bt + c$ in Theorem \ref{CLR_quad} gives the value of the sextic $a M_2^3 + bM_2M_4 + c M_6$ at an $n$-tuple $v_t$ with $t$ 1's and $n-t$ 0's. \end{rk}

 \section{Psd not sos even symmetric $n$-ary octics for $n \geq 4$} 
 \label{sec:PNSsymmOctics}

It follows from Theorem \ref{thm:symmpsdnotsosquartics} that for $m \ge 2$,
\[
\begin{gathered}
G_{2m+1}(x_1,\dots,x_{2m+1}):= L_{2m+1}(x_1^2,\dots,x_{2m+1}^2) \in
S\mathcal{P}^e_{2m+1,8}; \\
D_{2m}(x_1,\dots,x_{2m}):= G_{2m+1}(x_1,\dots,x_{2m},0) \in S\mathcal{P}^e_{2m,8}.
\end{gathered}
\]
We showed in \cite{G-K-R-1} that $G_{2m+1}(\underline{x}) = 0$ for those $\underline{x} \in
\mathbb R^{2m+1}$ which are a permutation of $m+1$ $r$'s and  $m$
$s$'s for $(r,s) \in \mathbb R^2$, so that $D_{2m}(\underline{x}) = 0$,
projectively,  at any $v_m$ or $v_{m+1}$.

\begin{thm} \label{EvenSymmPSDnotSOSOctic} For $m \geq2$,
$D_{2m} \in \Delta_{2m,8}$ and $G_{2m+1} \in \Delta_{2m+1,8}$.
\end{thm}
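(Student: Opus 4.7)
The strategy is by contradiction. The psd assertions are immediate from $L_{2m+1}$ being psd (Theorem \ref{thm:symmpsdnotsosquartics}): $G_{2m+1}(\underline{x}) = L_{2m+1}(x_1^2, \dots, x_{2m+1}^2) \ge 0$ and similarly $D_{2m} \ge 0$. To show $G_{2m+1}$ is not sos, suppose to the contrary that $G_{2m+1} = \sum_j q_j^2$. By Theorem \ref{lem:SquaresinEvenFormReven} we may assume each $q_j$ is supported on a single parity class, so there is $T_j \subseteq \{1, \dots, 2m+1\}$ with
\[
q_j(\underline{x}) = \prod_{i \in T_j} x_i \cdot p_j(x_1^2, \dots, x_{2m+1}^2)
\]
for some polynomial $p_j$; since $\deg q_j = 4$, we have $|T_j| \in \{0, 2, 4\}$ and $p_j$ is homogeneous of degree $2 - |T_j|/2$ in the variables $y_i := x_i^2$. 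Substituting $y_i = x_i^2$ into the sos identity yields the polynomial identity
\[
L_{2m+1}(\underline{y}) = \sum_j \Big(\prod_{i \in T_j} y_i\Big) p_j(\underline{y})^2.
\]

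Next I exploit the zeros of $L_{2m+1}$. For $S \subseteq [2m+1]$ let $v_S \in \{0,1\}^{2m+1}$ denote its indicator vector; taking $(r,s) \in \{(1,0),(0,1)\}$ in the excerpt's zero description, $L_{2m+1}(v_S) = 0$ for every $|S| \in \{m, m+1\}$. Since each summand of the identity is nonnegative at $v_S$, they must vanish individually, whence $p_j(v_S) = 0$ whenever $T_j \subseteq S$. I then analyse by cases on $|T_j|$: (i) for $|T_j| = 0$, writing $p_j(\underline{y}) = \sum_i a_i y_i^2 + \sum_{i<k} b_{ik} y_i y_k$, the vanishing conditions $p_j(v_S) = 0$ across $|S| \in \{m, m+1\}$ together with subset-swap comparisons force all $a_i$'s to be equal and all $b_{ik}$'s to be equal, and the two remaining scalar constraints then form a nondegenerate linear system with only the trivial solution, so $p_j \equiv 0$; (ii) for $T_j = \{a,b\}$, an analogous reduction forces $p_j(\underline{y}) = e_j(y_a - y_b)$ for some $e_j \in \mathbb R$; (iii) for $|T_j| = 4$ and $m \ge 3$, picking $S \supseteq T_j$ with $|S| = m+1$ forces the constant $p_j = 0$, while for $m = 2$ no such $S$ exists and $p_j$ remains unconstrained.

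Substituting back, the identity collapses to
\[
L_{2m+1}(\underline{y}) = \sum_{a<b} C_{ab}\, y_a y_b (y_a - y_b)^2 + \sum_{|T|=4} C'_T \prod_{i \in T} y_i,
\]
with $C_{ab}, C'_T \ge 0$ and the last sum vacuous for $m \ge 3$. Every monomial on the right-hand side involves at least two distinct variables, so the coefficient of $y_1^4$ on the right vanishes. On the other hand, direct expansion gives the coefficient of $y_1^4$ in $L_{2m+1}(\underline{y})$ as $m(m+1) \cdot 2m - (2m)^2 = 2m^2(m-1)$, which is strictly positive for $m \ge 2$. This contradiction shows $G_{2m+1}$ is not sos. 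The argument for $D_{2m}$ is identical in $2m$ variables: the zero set is still indexed by $S \subseteq [2m]$ with $|S| \in \{m, m+1\}$, and the $y_1^4$-coefficient of $L_{2m+1}$ is preserved when setting $y_{2m+1} = 0$.

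The main technical step is the vanishing analysis in case (i), where the two distinct size constraints must be combined to eliminate the last free parameter; the subsequent $y_1^4$-coefficient comparison then closes the argument in one line.
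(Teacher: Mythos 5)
Your proposal is correct and follows essentially the same route as the paper: reduce via Theorem \ref{lem:SquaresinEvenFormReven} to summands in a single parity class, use the vanishing of $L_{2m+1}$ at all indicator vectors of size $m$ and $m+1$ to force the all-even summands (your case (i), the paper's $h_t$) to vanish identically, and contradict the strictly positive coefficient $2m^2(m-1)$ of $x_1^8$. The only differences are cosmetic: you classify the odd-parity classes (which the paper skips, since only the all-even class can produce $x_1^8$) and you prove $G_{2m+1}$ directly rather than deducing it from $D_{2m}$ by setting $x_{2m+1}=0$; the key "subset-swap" step you flag is exactly the inclusion--exclusion on $v_m$, $v_m$, $v_{m+1}$ carried out explicitly in the paper.
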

\begin{proof} We observe that $D_{2m}(v_1) > 0$; in fact, it is equal to $m(m+1)(2m)-(2m)^2 = 2m^2(m-1)$. Thus, the coefficient of $x_i^8$ in $D_{2m}$ is
positive. Suppose $D_{2m} = \sum h_t^2$. Then $x_i^4$ must appear with
non-zero coefficient in at least one $h_t$. Since we may assume that
$h_t^2$ is even (using Theorem \ref{lem:SquaresinEvenFormReven}), we must have
\[
h_t = \sum_{i=1}^{n} a_i x_i^4 + \sum_{1 \le i < j \le n} b_{i,j} x_i^2 x_j^2.
\]
Since $D_{2m}(v_m)=  D_{2m}(v_{m+1}) = 0$, it follows that  $h_t(v_m)
= h_t(v_{m+1}) = 0$, and this holds for all permutations of $v_m$ and
$v_{m+1}$. Our goal is to show that these equations imply that $h_t =
0$, which will contradict the assumption that $D_{2m}$ is
sos. By symmetry, it suffices to prove that $a_i = 0$ for one choice
of $i$.

To this end,
let $y^{(1)}=v_m(1, \ldots, m-1, 2m-1), y^{(2)}=v_m(1, \ldots, m-1, 2m)$ and $y^{(3)}=v_{m+1}(1, \ldots, m-1, 2m-1,2m)$. Then
\[
\begin{gathered}
0 = h_t(y^{(1)}) = \sum_{i=1}^{m-1} a_i + a_{2m-1} +
\sum_{1 \le i<j \le  m-1} b_{i,j}  + \sum_{i=1}^{m-1} b_{i,2m-1}; \\
0 = h_t(y^{(2)}) = \sum_{i=1}^{m-1} a_i + a_{2m} +
\sum_{1 \le i<j \le  m-1} b_{i,j}  + \sum_{i=1}^{m-1} b_{i,2m}; \\
0 = h_t(y^{(3)}) = \sum_{i=1}^{m-1} a_i + a_{2m-1} + a_{2m} +
\sum_{1 \le i<j \le  m-1} b_{i,j}  + \sum_{i=1}^{m-1} b_{i,2m-1} +
\sum_{i=1}^{m-1} b_{i,2m} + b_{2m-1,2m}.
\end{gathered}
\]
Taking the first equation plus the second minus the third yields
\[
 \sum_{i=1}^{m-1} a_i + \sum_{1\le i < j \le m-1} b_{i,j} = b_{2m-1,2m}.
\]
Since $m \ge 2$, $m-1 < 2m-2$; thus, the same argument implies that
\[
\sum_{i=1}^{m-1} a_i + \sum_{1\le i < j \le m-1} b_{i,j} = b_{2m-2,2m}.
\]
That is, the coefficient of $x_{2m-1}^2x_{2m}^2$ in $h_t$ equals the
coefficient of  $x_{2m-2}^2x_{2m}^2$, and so by symmetry, for all
distinct $i,j,k,\ell$,  the
coefficient of $x_i^2x_j^2$ equals the coefficient of $x_i^2x_k^2$,
which equals the coefficient of $x_k^2x_{\ell}^2$. Thus, for all
$i\neq j$,
$b_{i,j} = u$ for some $u$.

Subtracting the first from the second
equation gives now $a_{2m-1} = a_{2m}$, and so for all $i$,  $a_i =
v$ for some $v$. Finally, our previous equations imply that
\[
\begin{gathered}
0 = mv + \binom m2 u = (m+1)v + \binom {m+1}2 u = 0  \\
\implies
-v = \frac{m-1}2 \cdot u =  \frac{m}2 \cdot u \implies u  = 0 \implies
 v = 0.
\end{gathered}
\]
In other words,  $h_t = 0$, establishing the contradiction.

\vspace{0.1cm}

Suppose now that $G_{2m+1}$ were sos. Then
\[
G_{2m+1} = \sum_{t=1}^{r} h_t^2 \implies D_{2m} = \sum_{t=1}^{r} h_t^2(x_1,\dots,x_{2m},0),
\]
a contradiction. Thus $G_{2m+1}$ is not sos.
\end{proof}

\vspace{0.1cm}

\begin{rk} It was asserted in \cite{CLR-3} that the psd even symmetric $n$-ary octic
\[M_2\Big(M_2^3-(2k+1)M_2M_4 + k(k+1)M_6\Big) 
\]
  is not sos, provided $2 \le k \le n-2$. We prove this below for $k=2$ and $n \ge
4$.
\end{rk}

\begin{thm} \label{NewEvenSymmPSDnotSOSOctic}
For $n \ge4$, 
\[
T_{n}(x_1,\dots,x_n)=M_2\Big(M_2^3-5M_2M_4 + 6M_6\Big) \in \Delta_{n,8}.
\]
\end{thm}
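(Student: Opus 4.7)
The plan is to factor $T_n = M_2\cdot p$ with $p:= M_2^3-5M_2M_4+6M_6$ and apply Theorem~\ref{CLR_quad} to $p$ with $(a,b,c)=(1,-5,6)$. The associated quadratic $\phi(t) = t^2-5t+6 = (t-2)(t-3)$ is nonnegative on $\{1,2,\dots,n\}$, so $p$ is psd and hence so is $T_n = M_2 \cdot p$. The critical point is that $\phi(2)=\phi(3)=0$ while $M_2(v_t)=t>0$, so by the Observation the form $T_n$ vanishes at every permutation of $v_2$ and $v_3$. These are exactly the zeros that will be exploited.

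Suppose toward a contradiction that $T_n = \sum_{t=1}^{r} h_t^2$. By Theorem~\ref{lem:SquaresinEvenFormReven} we may take each $h_t^2$ to be even, and since $\deg T_n = 8$, this forces each $h_t$ to be an even quartic
\[
h_t = \sum_{i=1}^n a_i\, x_i^4 + \sum_{1 \le i < j \le n} b_{ij}\, x_i^2 x_j^2.
\]
Since a sum of squares vanishes exactly where each summand does, each $h_t$ must vanish at every $v_2(i,j)$ and $v_3(i,j,k)$. Using the coordinate notation of the Observation, these vanishing conditions reduce to the linear system
\[
a_i + a_j + b_{ij} = 0 \quad (i<j), \qquad a_i + a_j + a_k + b_{ij} + b_{ik} + b_{jk} = 0 \quad (i<j<k).
\]

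The final step is to show that this system has only the trivial solution when $n\ge 4$. Eliminating $b_{ij} = -(a_i+a_j)$ and substituting into the triple equations yields $a_i + a_j + a_k = 0$ for every distinct triple of indices. Because $n\ge 4$, comparing $\{i,j,k\}$ and $\{i,j,\ell\}$ with $k\neq\ell$ forces $a_k = a_\ell$; iterating shows that all $a_i$ share a common value $a$, and then $3a=0$ forces $a=0$ and consequently $b_{ij}=0$. Thus $h_t \equiv 0$ for every $t$, contradicting $T_n \not\equiv 0$. The only delicate point is the hypothesis $n\ge 4$: for $n=3$ the analogous system has a two-dimensional null space and the symmetry argument collapses, so the hypothesis $n\ge 4$ is essential and is exactly what makes the zero structure on $v_2,v_3$ rich enough to pin down the coefficients.
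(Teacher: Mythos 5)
Your overall strategy is the same as the paper's: psd-ness of the second factor via Theorem~\ref{CLR_quad}, reduction to even squares via Theorem~\ref{lem:SquaresinEvenFormReven}, the linear system coming from vanishing at all $v_2$ and $v_3$, and the use of $n\ge 4$ to force all $a_i$ equal and hence zero. But there is a gap in how you set up and close the contradiction. Theorem~\ref{lem:SquaresinEvenFormReven} does \emph{not} say that each $h_t$ is an even form; it says each $h_t^2$ is even, i.e.\ the exponent vectors of the monomials of $h_t$ all lie in a single congruence class mod $2$ componentwise. For a quartic that class need not be $(0,\dots,0)$: a summand such as $h_t = x_1^3x_2 + x_1x_2x_3^2$ is perfectly admissible. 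So your claim that ``this forces each $h_t$ to be of the form $\sum a_i x_i^4 + \sum b_{ij} x_i^2x_j^2$'' is false, and consequently your final step --- ``all $h_t\equiv 0$, contradicting $T_n\not\equiv 0$'' --- does not follow: you have only shown that the summands lying in the all-even congruence class vanish, and the remaining summands could in principle still produce a nonzero $T_n$.

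The fix is exactly the observation the paper makes and that you omitted: $T_n(v_1) = 1\cdot(1-5+6) > 0$, so the coefficient of $x_i^8$ in $T_n$ is positive. Since $x_i^8$ can only arise as $(x_i^4)^2$ inside some $h_t^2$, at least one $h_t$ must contain $x_i^4$ with nonzero coefficient, and \emph{that} $h_t$ necessarily lies in the all-even congruence class, hence has the displayed form. Your linear-system argument then shows $a_i=0$ for every such $h_t$, which contradicts the positivity of the coefficient of $x_i^8$. With that one sentence restored, your proof coincides with the paper's.
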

\begin{proof} Note that $T_n$ is psd by Theorem 2.8. Suppose
\[
T_{n}(x_1,\dots,x_n) =  \sum_{r=1}^m h_r^2(x_1,\dots,x_n).
\]
Then, $T_{n}(v_2) = T_{n}(v_3) = 0$ but $T_{n}(v_1) > 0$. In particular, the terms $x_j^4$ must appear on the
right hand side. As in the proof of Theorem \ref{EvenSymmPSDnotSOSOctic}, these
terms must appear in
\[
\sum_{k=1}^n a_k x_k^4 + \sum_{1 \le j < k \le n} b_{jk} x_j^2x_k^2,
\]
which must vanish at every $v_2$ and every $v_3$. In particular, for
$i<j<k$, we have
\[
\begin{gathered}
a_i + a_j + b_{ij} = 0, \\
a_i + a_k + b_{ik} = 0, \\
a_j + a_k + b_{jk} = 0, \\
a_i + a_j + a_k + b_{ij} + b_{ik} + b_{jk} = 0.
\end{gathered}
\]
It easily follows that $a_i + a_j + a_k = 0$. Now assume
$i,j,k$ are distinct, but not necessarily increasing. Since $n \ge
4$, there is an unused index $\ell$ and we may conclude that  $a_i +
a_j + a_\ell = 0$. Hence $a_k = a_{\ell}$. Since these are arbitrary,
we conclude that $a_m$ is independent of $m$, and since $a_i + a_j +
a_k = 0$, it follows that each $a_m = 0$, a contradiction.
\end{proof}

\begin{rk} For $n = 3$, $M_2(M_2^3 - 5M_2M_4+ 6M_6) = 2M_2 R$ \ is
sos, see \cite{Rob-1}, or equation (7.4) in \cite{CLR-3}.
%This was proved in [19] and an accessible reference is [6, (7.4)] Be aware that in [6], we called the Robinson form S(x,y,z), not R(x,y,z), and as part of a family of extremal forms, it was f_2(x_1,x_2,x_3).
\end{rk}

\section{Psd not sos even symmetric $n$-ary decics and dodecics for $n \geq 4$}
 \label{sec:PNSsymmDecicsDodecics}

 \begin{thm} \label{EvenSymmPSDnotSOSDecic}
 For $n \ge 4$,
\[
P_n(x_1, \ldots, x_n)=(nM_4 - M_2^2)(M_2^3 - 5M_2M_4 + 6M_6) \in \Delta_{n,10}.
\]
 \end{thm}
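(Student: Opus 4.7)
The plan is to show $P_n$ is psd and then that no sos decomposition can exist. For the psd part, I would factor $P_n = F \cdot G$ with $F = nM_4 - M_2^2$ and $G = M_2^3 - 5M_2M_4 + 6M_6$. The factor $F$ is psd by Cauchy--Schwarz applied to $(x_1^2,\dots,x_n^2)$ and $(1,\dots,1)$, while $G$ is psd by Theorem \ref{CLR_quad} with $(a,b,c)=(1,-5,6)$, since $t^2-5t+6 = (t-2)(t-3) \ge 0$ for every positive integer $t$. Both factors are even symmetric, so $P_n \in S\mathcal{P}^e_{n,10}$.

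Next I would exploit the zero set. Using $M_r(v_t) = t$, one computes $P_n(v_t) = t^2(n-t)(t-2)(t-3)$, so $P_n(v_2) = P_n(v_3) = P_n(v_n) = 0$, whereas $P_n(v_1) = 2(n-1) > 0$; in particular, the coefficient of $x_i^{10}$ in $P_n$ is $2(n-1) > 0$. Now suppose, for contradiction, $P_n = \sum_{r=1}^m h_r^2$. By Theorem \ref{lem:SquaresinEvenFormReven} each $h_r^2$ may be assumed even. Since the coefficient of $x_1^{10}$ is strictly positive, some $h_r$ must contain $x_1^5$ with a nonzero coefficient $a$, which forces the common parity vector of $h_r$'s monomials to be $(1,0,\dots,0)$. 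Enumerating the degree-$5$ monomials of that parity,
\[
h_r = a\,x_1^5 + \sum_{j \neq 1} b_j\,x_1^3 x_j^2 + \sum_{j \neq 1} c_j\,x_1 x_j^4 + \sum_{\substack{j<k \\ j,k \neq 1}} d_{jk}\,x_1 x_j^2 x_k^2.
\]

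I would then derive the contradiction via vanishing at $v_2$, $v_3$, and $v_n$. Evaluating $h_r$ at $v_2(1,j)$ gives $a + b_j + c_j = 0$, and at $v_3(1,j,k)$ (for $j \neq k$ both $\neq 1$) gives $a + b_j + b_k + c_j + c_k + d_{jk} = 0$. Subtracting the two corresponding $v_2$-equations from the $v_3$-equation yields $d_{jk} = a$ for all such pairs. Finally, evaluating at $v_n=(1,\dots,1)$ and substituting $b_j + c_j = -a$ and $d_{jk} = a$ gives
\[
0 = h_r(v_n) = a + (n-1)(-a) + \binom{n-1}{2}a = a \cdot \frac{(n-2)(n-3)}{2},
\]
and since $(n-2)(n-3)/2 > 0$ for $n \ge 4$, we conclude $a = 0$, a contradiction. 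The main technical point is that the $v_2$ and $v_3$ equations alone leave $a$ undetermined; it is the additional zero at $v_n$, supplied by the Cauchy--Schwarz equality in the factor $nM_4 - M_2^2$, that closes the argument. This is precisely why $P_n$ is built with this first factor, rather than, say, $M_2$ as in the octic $T_n$.
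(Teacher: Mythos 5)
Your proposal is correct and follows essentially the same route as the paper: psd via Cauchy--Schwarz on the first factor and Theorem \ref{CLR_quad} on the second, then the even-sos reduction of Theorem \ref{lem:SquaresinEvenFormReven}, the same parametrization of $h_r$, and the same linear equations from vanishing at $v_2$, $v_3$ and $v_n$ forcing $a=0$. The explicit formula $P_n(v_t)=t^2(n-t)(t-2)(t-3)$ and the closing remark on why the factor $nM_4-M_2^2$ (rather than $M_2$) supplies the crucial zero at $v_n$ are nice touches but do not change the argument.
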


 \begin{proof} First recall that
\[
nM_4 - M_2^2 = n\sum_{k=1}^n x_k^4 - \left(\sum_{k=1}^n x_k^2\right)^2
= \sum_{i < j} (x_i^2 - x_j^2)^2
\]
is psd by Cauchy-Schwarz. The zero set is $(\pm1, \dots, \pm 1)$.

Second, recall from Theorem \ref{CLR_quad} 
that the quadratic $t(a t^2 + bt + c)$ gives the value of the sextic $a M_2^3 + b M_2M_4 + c M_6$ at an $n$-tuple
$v_t$ with $t$ 1's and $n-t$ 0's. Since $t(t-2)(t-3) \ge 0$, this criterion is
satisfied, and the second factor is also psd with zeros at $v_2$ and
$v_3$. 

It follows that $P_n$ is psd and its coefficient of
$x_1^{10}$ is $(n-1)(1-5+6) > 0$. We show that $P_n$ is not sos
by showing that in any sos expression, $x_1^{10}$ cannot occur.

Using Theorem \ref{lem:SquaresinEvenFormReven}, we see that if $P_n = \sum h_r^2$
and $x_1^5$ occurs in $h_r$, then
\[
h_r = a x_1^5 + x_1^3\left(\sum_{k=2}^n b_k x_k^2 \right) +
 x_1\left(\sum_{k=2}^n c_k x_k^4 \right) +
x_1\left(\sum_{2 \le j <  k < n} d_{jk} x_j^2x_k^2 \right).
\]
Since $P_{n}(v_2(1,j)) = P_n(v_3(1,j,k)) = 0$ for all $j,k$, $2 \le j < k \le n$, it follows that $0 = h_r(v_2(1,j)) = h_r(v_3(1,j,k))=0$, and we have the equations
\[
\begin{gathered}
0 = a + b_j + c_j, \\
0 = a + b_j + b_k + c_j + c_k + d_{jk} = (a + b_j + c_j) + (a + b_k + c_k)
+ d_{jk} - a.
\end{gathered}
\]
From these equations, we may conclude that for all  $2 \le j < k \le n$,
\[
b_k + c_k = -a, \qquad d_{jk} = a.
\]
Finally, $P_n(v_n) = 0$, so $h_r(v_n) = 0$; that is,
\[
0 = a + \sum_{k=2}^n (b_k + c_k) + \sum_{2 \le j <  k < n} d_{jk} =
a\left(1 - (n-1) + \binom{n-1}2\right) = a\cdot \frac{(n-2)(n-3)}2.
\]
Thus, $a = 0$ and $x_1^5$ occurs in no $h_r$.  This
gives the contradiction.
 \end{proof}

 \begin{rk} When $n = 3$, $P_n$ is sos:
%\[
%\begin{gathered}

\noindent $P_3=(3M_4 - M_2^2)(M_2^3 - 5M_2M_4 + 6M_6)$ %= \\

$=4(x^4 + y^4 + z^4 - x^2y^2 -
x^2z^2 - y^2z^2)R(x,y,z)$ %\\

$= 4(x^2(x^2-y^2)^2(x^2-z^2)^2 + y^2(y^2 - x^2)^2 (y^2 - z^2)^2 + z^2
(z^2 - x^2)^2 (z^2 - y^2)^2).$
%\end{gathered}
%\]
 \end{rk}

 \begin{rk} We have also shown that for $m \geq 2$, $\displaystyle  M_2 G_{2m+1} \in \Delta_{2m+1,10}$. We shall discuss $ M_2 G_{2m+1}$ and  $M_2 D_{2m}$ in a future publication.
 \end{rk}

 \begin{thm} \label{2ndEvenSymmPSDnotSOSDodecic}
 For $n \ge 5$,
\[
Q_n(x_1, \ldots, x_n) = (M_2^3 - 5M_2M_4 + 6M_6)(M_2^3 - 7M_2M_4+12M_6) \in \Delta_{n,12}.
\]
 \end{thm}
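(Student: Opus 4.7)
The plan is to mirror the arguments of Theorems \ref{EvenSymmPSDnotSOSOctic}, \ref{NewEvenSymmPSDnotSOSOctic}, and \ref{EvenSymmPSDnotSOSDecic}. Applying Theorem \ref{CLR_quad} to each sextic factor, the quadratics $t^2-5t+6=(t-2)(t-3)$ and $t^2-7t+12=(t-3)(t-4)$ are nonnegative on $\{1,\dots,n\}$, so both factors are psd and $Q_n$ vanishes at every $v_2, v_3, v_4$. Moreover $Q_n(v_1)=(1-5+6)(1-7+12)=12$, so the coefficient of $x_1^{12}$ in $Q_n$ is $12>0$. Assuming $Q_n=\sum_{r} h_r^2$, Theorem \ref{lem:SquaresinEvenFormReven} ensures each $h_r^2$ is even; the positivity of the $x_1^{12}$-coefficient forces some $h_r$ to contain $a x_1^6$ with $a\ne 0$, and a parity check shows every monomial of this $h_r$ has even exponent in each variable. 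Since $\deg h_r=6$, one can therefore write
\[
h_r = a x_1^6 + x_1^4\!\sum_{m\ge 2} b_m x_m^2 + x_1^2\!\sum_{m\ge 2} c_m x_m^4 + x_1^2\!\sum_{2\le j<k} d_{jk}\, x_j^2 x_k^2 + \sum_{m\ge 2} e_m x_m^6 + \!\!\sum_{\substack{j,k\ge 2\\ j\ne k}} f_{jk}\, x_j^4 x_k^2 + \!\sum_{2\le i<j<k} g_{ijk}\, x_i^2 x_j^2 x_k^2.
\]

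The goal is to deduce $a=0$ from $h_r(v_t)=0$ for $t\in\{2,3,4\}$ at every placement of the $1$'s. I would first exploit placements disjoint from index $1$: $h_r(v_2(i,j))=0$ and $h_r(v_3(i,j,k))=0$ (for $i,j,k\ge 2$) give $f_{ij}+f_{ji}=-(e_i+e_j)$ and $g_{ijk}=e_i+e_j+e_k$. Next, $h_r(v_2(1,j))=0$ and $h_r(v_3(1,j,k))=0$ yield $b_j+c_j=-(a+e_j)$ and, using the previous relations, $d_{jk}=a+e_j+e_k$. Substituting all of these into $h_r(v_4(1,j,k,l))=0$ should collapse after cancellation to the clean relation
\[
a + (e_j + e_k + e_l) = 0 \qquad \text{for every distinct triple } \{j,k,l\}\subset\{2,\dots,n\}.
\]
Since $n\ge 5$, varying the triple forces the $e_m$ to be a common constant $e$ on $\{2,\dots,n\}$, with $3e=-a$. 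Finally, $h_r(v_4(i,j,k,l))=0$ for $i,j,k,l\ge 2$ (available precisely because $n\ge 5$) reduces via the same substitutions to $4e=0$, so $e=0$ and $a=0$, contradicting $a\ne 0$.

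The main obstacle I anticipate is the bookkeeping for the $v_4(1,j,k,l)$ simplification — tracking how the six contribution streams from $b,c,d,e,f,g$ cancel to leave only $a+(e_j+e_k+e_l)$. The hypothesis $n\ge 5$ enters in exactly two places: to have several triples in $\{2,\dots,n\}$ so that subtraction forces the $e_m$ equal, and to produce a quadruple disjoint from $\{1\}$ for the final $v_4$ evaluation. These are precisely the steps that break at $n=4$, consistent with the theorem's hypothesis.
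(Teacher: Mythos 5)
Your proposal is correct and follows essentially the same route as the paper's proof: establish psd-ness of both factors via Theorem \ref{CLR_quad}, note the zeros at every $v_2,v_3,v_4$ but not $v_1$, force an even $h_r$ containing $x_1^6$, and use the evaluations at $v_2,v_3,v_4$ to show the sum of any four distinct $x_i^6$-coefficients vanishes, whence (by $n\ge 5$) all are equal and zero. The paper keeps the notation symmetric in all indices (writing $\alpha_i,\beta_{ij},\gamma_{ijk}$ rather than singling out $x_1$), which makes the $v_4$ cancellation $(1-3+3)\sum\alpha=\sum\alpha$ immediate, but the content is identical.
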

 \begin{proof}
 Since $(t-2)(t-3) \ge 0$ and
$(t-3)(t-4) \ge 0$, both factors in $Q_n$ are psd by Theorem \ref{CLR_quad}. The first has zeros at every
$v_2$ and $v_3$ and the second has zeros at every $v_3$ and $v_4$. But
note that neither has a zero at $v_1$. In fact, the coefficient of
$x_1^6$ in $Q_n$ is $(1-5+6)(1-7+12) > 0$. 

Suppose $Q_n$ is sos and $Q_n = \sum f_k^2$. As before, assume the
$f_k^2$'s are even (using Theorem \ref{lem:SquaresinEvenFormReven}).  Then $f_k(v_t) = 0$ for
every $v_t$ with $t = 2,3,4$. Since $Q_n(v_1) > 0$, there must be an $f_k$
containing $x_i^6$, which will be itself even. To this end, suppose
\[
f_k = \sum_{i=1}^n \al_i x_i^6 + \sum_{1 \leq i\neq j \leq n}\be_{ij}x_i^4x_j^2 +
\sum_{1 \leq i<j<k \leq n} \ga_{ijk}x_i^2x_j^2x_k^2.
\]
For $i < j$, let $\mu_{ij} = \be_{ij} + \be_{ji}$.
By evaluating at $v_2(i,j)$, we see that
\[
0 = \al_i + \al_j + \be_{ij} + \be_{ji} = \al_i + \al_j + \mu_{ij}
\implies \mu_{ij} = -\al_i -\al_j.
\]
By evaluating at $v_3(i,j,k)$, we have
\[
\begin{gathered}
0 = \al_i + \al_j + \al_k + \mu_{ij} + \mu_{ik} + \mu_{jk} + \ga_{ijk} =  \\
(\al_i+\al_j + \al_k) - 2(\al_i+\al_j + \al_k) + \ga_{ijk} \implies
\ga_{ijk} = (\al_i + \al_j + \al_k).
\end{gathered}
\]
Finally, by evaluating at $v_4(i,j,k,\ell)$, we have
\[
\begin{gathered}
0 = \al_i + \al_j + \al_k + \al_{\ell} + \mu_{ij} + \mu_{ik} +
\mu_{jk} +\mu_{i\ell} + \mu_{j\ell} + \mu_{k\ell} +  \ga_{ijk} +
\ga_{ij\ell}+\ga_{ik\ell} + \ga_{jk\ell} \\
= (\al_i + \al_j + \al_k + \al_{\ell})(1 -3+3) \implies \al_i + \al_j
+ \al_k + \al_{\ell} = 0.
\end{gathered}
\]
In other words, the sum of any four distinct $\al_r$'s is 0.
Since $n \ge 5$, there exists $m \in \{1,\dots,n\}$ different from
$i,j,k,\ell$ and
we have $\al_i + \al_j + \al_k + \al_{m} = 0$. Thus $\al_{\ell} =
\al_m$, and since the choice of $\ell$ and $m$ was arbitrary, we
conclude that $\al_1 = \dots = \al_n = \al$, so that $4\al = 0$ and
thus the coefficient of $x_i^6$ in $f_k$ must be zero, a
contradiction.
 \end{proof}

\begin{rk} We have been unable to determine whether $Q_3$ and $Q_4$ are sos.
\end{rk}

  \begin{thm}  \label{EvenSymmPSDnotSOSDodecic}
  For $n \ge 3$,
\begin{equation}
\begin{gathered}
R_n(x_1, \ldots, x_n) = \frac 1{12}\cdot (M_2^3 - 3M_2M_4+2M_6)(M_2^3 - 5M_2M_4 + 6M_6) \\
= \left( \sum_{1 \leq i<j<k \leq n} x_i^2x_j^2x_k^2\right)\left( \sum_{i=1}^{n} x_i^6 - \sum_{1 \leq i\neq j \leq n} x_i^4x_j^2 + 3\sum_{1 \leq i<j<k \leq n} x_i^2x_j^2x_k^2\right) \in \Delta_{n,12}.
\end{gathered}
\nonumber \end{equation}
 \end{thm}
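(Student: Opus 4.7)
My plan is to mirror the vanishing-analysis strategy of Theorems \ref{EvenSymmPSDnotSOSOctic}--\ref{2ndEvenSymmPSDnotSOSDodecic}, but to close the argument by matching the coefficient of a carefully chosen low-support monomial rather than by producing an additional vanishing point. First I would verify that $R_n$ is psd: by Theorem \ref{CLR_quad} both sextic factors are psd since $(t-1)(t-2)\ge 0$ and $(t-2)(t-3)\ge 0$ for $t\in\{1,\ldots,n\}$. Using that the value of $aM_2^3+bM_2M_4+cM_6$ at $v_t$ is $t(at^2+bt+c)$, one computes
\[
R_n(v_t) = \tfrac{1}{12}\,t(t-1)(t-2)\cdot t(t-2)(t-3),
\]
which vanishes at $t\in\{1,2,3\}$. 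A direct coefficient computation also shows $M_2^3 - 3M_2M_4 + 2M_6 = 6\sigma_3$, where $\sigma_3 := \sum_{i<j<\ell} x_i^2 x_j^2 x_\ell^2$, confirming the factorization $R_n = \sigma_3\, R_n^*$ stated in the theorem.

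Next, suppose for contradiction that $R_n = \sum_k h_k^2$. By Theorem \ref{lem:SquaresinEvenFormReven} each $h_k$ may be taken even, so
\[
h_k = \sum_i \alpha_i\, x_i^6 + \sum_{i\ne j}\beta_{ij}\, x_i^4 x_j^2 + \sum_{i<j<\ell}\gamma_{ij\ell}\, x_i^2 x_j^2 x_\ell^2.
\]
Evaluating at $v_1(i)$ forces $\alpha_i=0$; with this, evaluating at $v_2(i,j)$ forces $\beta_{ij}+\beta_{ji}=0$, i.e.\ $\beta_{ji}=-\beta_{ij}$; and then evaluating at $v_3(i,j,\ell)$ forces $\gamma_{ij\ell}=0$. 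Setting $g_{ij}:=x_i^2 x_j^2(x_i^2 - x_j^2)$ for $i<j$, these equations pin $h_k$ down to the form $h_k = \sum_{i<j}\beta_{ij}^{(k)}\, g_{ij}$.

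For the contradiction I would exploit the fact that every monomial in $R_n = \sigma_3\, R_n^*$ has support of size at least $3$, since every term of $\sigma_3$ already involves three distinct variables; hence the coefficient of $x_i^8 x_j^4$ in $R_n$ is $0$ for every $i\ne j$. On the other hand, $g_{ij}^2 = x_i^8 x_j^4 - 2x_i^6 x_j^6 + x_i^4 x_j^8$ has support $\{i,j\}$, while $g_{ab}g_{a'b'}$ involves a variable outside $\{i,j\}$ unless $\{a,b\}=\{a',b'\}=\{i,j\}$; so the coefficient of $x_i^8 x_j^4$ in $\sum_k h_k^2$ is exactly $\sum_k(\beta_{ij}^{(k)})^2$. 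Matching forces $\beta_{ij}^{(k)}=0$ for all $k$ and all $i<j$, so every $h_k$ vanishes identically, contradicting $R_n \not\equiv 0$ (e.g.\ $R_n(v_4)=16$ for $n\ge 4$, and $R_n(1,1,2)=144$ for $n=3$). The main obstacle is spotting this monomial: the $v_1,v_2,v_3$-evaluations alone only force $h_k$ into the $g_{ij}$-span, and one might expect to need an extra zero of $R_n$ to finish (which is unavailable when $n=3$); instead, the antisymmetric nature of $g_{ij}$ makes its square supported on only two variables, precisely what the $\sigma_3$ factor forbids in $R_n$, giving a single argument that covers all $n\ge 3$ uniformly.
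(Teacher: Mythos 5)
Your psd verification and the normal form $h_k=\sum_{i<j}\beta_{ij}g_{ij}$ for the \emph{fully even} summands are fine, but the proof has a genuine gap at the very first reduction. Theorem \ref{lem:SquaresinEvenFormReven} does not say that each $h_k$ can be taken to have all exponents even; it says that each $h_k^2$ can be taken even, i.e.\ that the exponent vectors of the monomials of $h_k$ all lie in a single congruence class mod $2$. For a sextic $h_k$ there are also classes with $2$, $4$ or $6$ odd variables, e.g.\ $h_k=c_1x_i^5x_j+c_2x_i^3x_j^3+c_3x_ix_j^5+\cdots$. Your template $h_k=\sum\alpha_ix_i^6+\sum\beta_{ij}x_i^4x_j^2+\sum\gamma_{ij\ell}x_i^2x_j^2x_\ell^2$ covers only the all-even class, so the argument silently discards most of a putative sos representation. (The paper is careful about this elsewhere: in Theorems \ref{EvenSymmPSDnotSOSOctic} and \ref{2ndEvenSymmPSDnotSOSDodecic} one isolates the particular summand containing $x_i^4$, resp.\ $x_i^6$, and only \emph{that} summand is thereby forced into the all-even class.)

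This breaks your final step in two ways. First, the coefficient of $x_i^8x_j^4$ in $\sum_kh_k^2$ is not $\sum_k(\beta_{ij}^{(k)})^2$: an $h_k$ in the class where exactly $x_i,x_j$ are odd contributes $2c_1^{(k)}c_2^{(k)}$ through the product $x_i^5x_j\cdot x_i^3x_j^3$, and this can be negative, so you cannot conclude $\beta_{ij}^{(k)}=0$. Second, even if every all-even $h_k$ were shown to vanish, that is not yet a contradiction, because the representation could a priori consist entirely of odd-class squares (for $n=3$ one has $R_3=x^2y^2z^2R(x,y,z)$, so any candidate squares would naturally carry the factor $xyz$ and lie in an odd class). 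This is exactly why the paper's proof must (i) restrict to two-variable coordinate subspaces, using the identity $R_n(t,u,0,\dots,0)\equiv 0$ rather than just the isolated zeros $v_1,v_2$, to kill all monomials of support $\le 2$ in \emph{every} class; (ii) locate the monomial $x_1^2x_2^2x_3^8$ of $R_n$, which after step (i) can only arise as $(x_1x_2x_3^4)^2$, forcing a nonzero $h_r$ in the class $x_1x_2\cdot(\text{even quartic})$; and (iii) rule that out with the one-parameter evaluation $R_n(t,1,1,0,\dots,0)=t^4(t^2-1)^2$. Your argument would need all three ingredients (or substitutes for them) to close.
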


 \begin{proof}
Since $(t-1)(t-2) \ge 0$ and $(t-2)(t-3) \ge 0$, 
both factors in $R_n$ are psd by Theorem \ref{CLR_quad}.
Moreover, the first factor implies that
\begin{equation} \label{eq:tu}
R_n(t,u,0,\dots,0) = 0
\end{equation}
for all real $t,u$, and at all $n$-tuples which are permutations of $(t,u,0,\dots,0)$.  We also have, for all $t$,
\begin{equation} \label{eq:pn:t11}
R_n(t,1,1,0,\dots,0) = t^2 \cdot \tfrac 12 \cdot ((2 + t^2)^3 -5(2+t^2)(2+t^4)+6(2+t^6)) = t^4(t^2-1)^2;
\end{equation}
this also holds by symmetry at any permutation of $n-3$ 0's, two 1's and one $t$.

We first remark that if $n=3$, $R_n(x,y,z) = x^2y^2z^2R(x,y,z)$; since $R$ is not sos, the same holds for $R$ multiplied by a product of squared linear factors. For $n \ge 4$,  more work is necessary.

Suppose $R_n = \sum_r h_r^2$, so that $\deg h_r = 6$. Suppose as usual that each $h_r^2$ is even (using Theorem \ref{lem:SquaresinEvenFormReven}). It follows from equation (\ref{eq:tu}) that for any such $h_r$,
$h_r(t,u,0,\dots,0) = 0$, for all $(t,u)$.
If, say, the terms in $h_r$ involving only $x_1^{6-k}x_2^k$ are $\sum_{k=0}^6 a_kx_1^{6-k}x_2^k$, then $\sum a_k t^{6-k}u^k = 0$ for all $t,u$, which implies that $a_k=0$. Proceeding similarly for all pairs of variables, we see that no monomial involving one or two variables can appear in any $h_r$.

For equation (\ref{eq:pn:t11}), let $\phi_r(t) =  h_r(t,1,1,0,\dots,0)$. We have
\begin{equation} \nonumber
\sum_{r=1}^{w} \phi_r(t)^2 = t^4(t^2-1)^2.
\end{equation}
Evaluation at $t = 0,1,-1$ shows that $\phi_r(t) = t(t^2-1)\psi_r(t)$, so that
\begin{equation} \nonumber
\sum_{r=1}^{w}  \psi_r(t)^2 = t^2,
\end{equation}
which in turn implies that $\psi_r(t) = \la_r t$ for some real $\la_r$. To recapitulate, we have
\begin{equation} \label{eq:hr:t111}
h_r(t,1,1,0,\dots,0) = \la_rt^2(t^2-1),
\end{equation}
and similar equations hold for all permutations of the variables.

Since $x_1^2x_2^2x_3^8$ appears in $r_n$ with coefficient 1, it also appears in $\sum h_r^2$ with coefficient 1. Since no monomial occurs in any $h_r$ with only two variables, it follows that $x_1x_2x_3^4$ must appear in at least one $h_r$, and since $h_r^2$ is even, all terms in $h_r$ must be $x_1x_2$ times an even quartic monomial. Further, we already know that $x_1^5x_2, x_1^3x_2^3, x_1x_2^5$ do not occur. Thus
\begin{equation} \nonumber
h_r(x_1,\dots,x_n) = x_1x_2\left( \sum_{j=3}^n (a_j x_1^2 x_j^2 + b_j x_2^2x_j^2 + c_j x_j^4) \ + \sum_{3 \le j < k \le n} d_{jk}x_j^2x_k^2 \right).
\end{equation}
Thus,
\begin{equation} \nonumber
\begin{gathered}
h_r(t,1,1,0,\dots,0) = t (a_3 t^2 + b_3  + c_3), \\
h_r(1,t,1,0,\dots,0) = t (a_3  + b_3t^2  + c_3).
\end{gathered}
\end{equation}
In view of equation (\ref{eq:hr:t111}), both of these cubics must be identically zero, hence $a_3 = b_3+c_3 = b_3 = a_3+c_3 = 0$, and so, in particular, $c_3 = 0$. This means that $x_1x_2x_3^4$ does {\it not} appear in any $h_r$, establishing the contradiction.
  \end{proof}

\noindent \textbf{Proof of the Main Theorem.}

\noindent Combine Theorems \ref{reducedCases}, \ref{EvenSymmPSDnotSOSOctic}, \ref{EvenSymmPSDnotSOSDecic}, \ref{2ndEvenSymmPSDnotSOSDodecic} and \ref{EvenSymmPSDnotSOSDodecic}.
\hfill $\Box$

\vspace{0.2cm}

\noindent

We now present an application of the Main Theorem to the interpretation of even symmetric psd forms in terms of preorderings. We briefly recall the necessary background. Let $S = \{g_1, \ldots, g_s\} \subseteq \mathbb{R}[\underline{x}]$, and let
\vspace{-0.2cm}

\[
T_{S} := \left\{ \displaystyle \sum _{e_1,\ldots,e_s \in \{0,1\}} \sigma_{\underline{e}} \ g^{e_1}_1\ldots g^{e_s}_s \ | \ \sigma_{\underline{e}} \in \Sigma \mathbb{R}[\underline {x}]^2, \underline{e} = (e_1,\ldots,e_s) \right\}
\]
be the associated finitely generated quadratic preordering, and
\[
K_{S} := \{\underline{x} \in \mathbb{R}[\underline{x}] \ | \ g_1(\underline{x}) \geq 0, \ldots, g_s(\underline{x}) \geq 0 \}
\]
be the associated basic closed semi-algebraic set.

\vspace{0.1cm}

We recall the following result which follows from  \cite[Proposition 6.1]{Scheiderer-1}:

\begin{prop} \label{prop:Scheiderer result}
Let $S$ be a finite subset of $\mathbb{R}[\underline{x}]$, such that dim($K_S)\geq3$. Then there exists a $g \in \mathbb{R}[\underline{x}]$ s.t. $g \geq 0$ on $K_S$ but $ g \notin T_S$.
\end{prop}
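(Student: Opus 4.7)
The plan is to deduce the statement as a direct application of Scheiderer's Proposition 6.1 in \cite{Scheiderer-1}, which addresses when a finitely generated quadratic preordering fails to be saturated. Informally, Scheiderer's result says that whenever the associated basic closed semi-algebraic set has dimension at least three, the preordering is strictly smaller than the cone of polynomials non-negative on that set.

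The first step would be to fit our setup into Scheiderer's framework. Taking the ambient real affine variety to be $\mathbb{A}^n_{\mathbb{R}}$, the coordinate ring is $\mathbb{R}[\underline{x}]$, and the preordering $T_S$ is the finitely generated preordering corresponding to $g_1, \ldots, g_s$, with the associated basic closed semi-algebraic set being $K_S$. The two hypotheses of Scheiderer's proposition then hold by our assumptions: finite generation is immediate from the definition of $T_S$, and $\dim K_S \geq 3$ is given.

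Applying Scheiderer's result then yields the strict containment
\[
T_S \;\subsetneq\; \{\, f \in \mathbb{R}[\underline{x}] : f \geq 0 \text{ on } K_S \,\},
\]
and any element of the right-hand set not belonging to $T_S$ provides the desired $g$. I do not anticipate any substantive mathematical obstacle here; the only care needed is notational, namely translating Scheiderer's general formulation (phrased in terms of real affine varieties and regular functions, with saturation understood abstractly) into the concrete polynomial-ring language used in this paper. This translation is straightforward for $V = \mathbb{A}^n_{\mathbb{R}}$ since the coordinate ring of $V$ is exactly $\mathbb{R}[\underline{x}]$ and the topological/semi-algebraic notions coincide with their concrete counterparts.
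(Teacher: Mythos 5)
Your proposal matches the paper exactly: the paper offers no independent proof and simply recalls the statement as a consequence of Scheiderer's Proposition 6.1, which is precisely the citation-and-translation argument you give. Your extra care in identifying the ambient variety as $\mathbb{A}^n_{\mathbb{R}}$ is a reasonable elaboration of what the paper leaves implicit.
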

 In the concluding Remark \ref{f to g}, we investigate when can the form $g$ of Proposition \ref{prop:Scheiderer result} be chosen to be symmetric. We set $S':=\{x_1,\ldots,x_n\}$ and $K_{S'}=\mathbb{R}_{+}^n$ (the positive orthant).  We need the following relation between the preordering $T_{S'}$ and even sos forms, as verified in \cite[Lemma 1]{Fr-Hor}:

\begin{lem} \label{Lem:FH} Let $g \in \mathbb{R}[\underline{x}]$. Then $g(x_1^2, \ldots, x_n^2)$ 
is sos if and only if $g \in T_{S'}$.
\end{lem}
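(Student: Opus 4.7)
The plan is to prove the two directions separately. The ``if'' direction is a straightforward manipulation, while the ``only if'' direction requires a parity-decomposition of each sos summand.

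For the forward implication, suppose $g = \sum_{\underline{e}\in\{0,1\}^n} \sigma_{\underline{e}}\, x^{\underline{e}}$ with $\sigma_{\underline{e}}\in \Sigma \mathbb{R}[\underline{x}]^2$, where I write $x^{\underline{e}}:=x_1^{e_1}\cdots x_n^{e_n}$. Substituting $x_i\mapsto x_i^2$ yields
\begin{equation*}
g(x_1^2,\ldots,x_n^2) = \sum_{\underline{e}} \sigma_{\underline{e}}(x_1^2,\ldots,x_n^2) \cdot (x^{\underline{e}})^2,
\end{equation*}
and since substituting squares into an sos polynomial preserves sos-ness, and each $(x^{\underline{e}})^2$ is itself a square, the right-hand side is visibly sos.

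For the reverse implication, assume $g(x_1^2,\ldots,x_n^2) = \sum_{i=1}^r h_i^2$. The key observation is that $\mathbb{R}[\underline{x}]$ is a free module over the subring $\mathbb{R}[x_1^2,\ldots,x_n^2]$ with basis $\{x^{\underline{e}}:\underline{e}\in\{0,1\}^n\}$, because every monomial $x_1^{a_1}\cdots x_n^{a_n}$ decomposes uniquely as $x^{\underline{e}}\cdot (x_1^2)^{b_1}\cdots(x_n^2)^{b_n}$ with $a_i = e_i + 2b_i$, $e_i\in\{0,1\}$. I would write each $h_i = \sum_{\underline{e}} x^{\underline{e}}\, P_{i,\underline{e}}(x_1^2,\ldots,x_n^2)$ accordingly, then square and expand, regrouping the cross term $x^{\underline{e}+\underline{e}'}$ via the identity $x^{\underline{e}+\underline{e}'} = x^{\underline{e}\oplus\underline{e}'}\cdot (x^{\underline{e}\wedge\underline{e}'})^2$ (componentwise XOR and AND). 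Since the total $\sum_i h_i^2$ equals $g(x_1^2,\ldots,x_n^2)$, which lies in $\mathbb{R}[x_1^2,\ldots,x_n^2]$, the uniqueness of the free-module representation forces every aggregated contribution indexed by $\underline{f}:=\underline{e}\oplus\underline{e}'\neq 0$ to vanish, leaving only the diagonal $\underline{e}=\underline{e}'$ terms. Undoing the substitution $y_i=x_i^2$ then yields
\begin{equation*}
g(\underline{y}) = \sum_{\underline{e}\in\{0,1\}^n} y^{\underline{e}} \Bigl(\sum_{i=1}^{r} P_{i,\underline{e}}(\underline{y})^2\Bigr),
\end{equation*}
and setting $\sigma_{\underline{e}}(\underline{y}):=\sum_i P_{i,\underline{e}}(\underline{y})^2 \in \Sigma \mathbb{R}[\underline{y}]^2$ exhibits $g$ as an element of $T_{S'}$.

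The main obstacle is the parity bookkeeping in the reverse direction: one must justify that the off-diagonal ($\underline{f}\neq 0$) contributions truly cancel as a whole, rather than merely after summation over $i$ in some subtler way. This is handled cleanly by invoking the freeness of $\mathbb{R}[\underline{x}]$ over $\mathbb{R}[x_1^2,\ldots,x_n^2]$, which makes the parity decomposition of any polynomial unique; once this uniqueness is granted, the diagonal restriction $\underline{e}=\underline{e}'$ immediately produces the squares $P_{i,\underline{e}}^2$ and the preordering representation falls out automatically.
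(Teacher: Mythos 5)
Your argument is correct. Note, though, that the paper does not prove this lemma at all: it simply cites \cite[Lemma 1]{Fr-Hor}, so what you have written is a self-contained replacement for that citation rather than a variant of an in-paper proof. Your forward direction is the routine substitution, and your reverse direction is the standard parity argument: the key fact that $\mathbb{R}[\underline{x}]$ is free over $\mathbb{R}[x_1^2,\dots,x_n^2]$ with basis $\{x^{\underline{e}}: \underline{e}\in\{0,1\}^n\}$ does make the decomposition unique, so projecting $\sum_i h_i^2 = g(\underline{x}^2)$ onto the $\underline{f}=\underline{0}$ component legitimately discards the off-diagonal terms and leaves exactly the diagonal squares $\sum_{i,\underline{e}} (x^{\underline{e}})^2 P_{i,\underline{e}}(\underline{x}^2)^2$; undoing $y_i = x_i^2$ (valid because $y_i\mapsto x_i^2$ is an isomorphism onto $\mathbb{R}[x_1^2,\dots,x_n^2]$) gives the preordering representation. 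It is worth observing that the reverse direction is essentially a special case of Theorem \ref{lem:SquaresinEvenFormReven} (the Choi--Lam--Reznick parity decomposition of even sos forms, already quoted in the paper): since $g(x_1^2,\dots,x_n^2)$ is even and sos, that theorem lets you assume each square is parity-homogeneous, i.e.\ of the form $\bigl(x^{\underline{e}}P(x_1^2,\dots,x_n^2)\bigr)^2$, from which the conclusion is immediate; invoking it would have saved you the explicit XOR/AND bookkeeping, though your direct verification is perfectly sound.
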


\begin{rk} \label{f to g}
Let $f$ be an even symmetric $n$-ary form of degree $2d$, and $g$ be the $n$-ary symmetric form of degree $d$
such that $g(x_1^2, \ldots, x_n^2)=f(x_1, \ldots, x_n)$. Clearly, $f$ is psd if and only if 
$g$ is nonnegative on %the positive orthant 
$\mathbb{R}_{+}^n$. Moreover, by Lemma \ref{Lem:FH}, $f \in \Sigma \mathbb{R}[\underline {x}]^2$ if and only if $g \in T_{S'}$. Applying our Main Theorem,
we get %for example, 
that for $n \geq 3$, $d \geq 3$ and $(n,2d) \neq (3,8)$, there exists a {\it symmetric} $n$-ary $d$-ic form $g$ %\in \mathcal{F}_{n,d}$ 
such that $g \geq 0$ on $\mathbb{R}_{+}^n$ but $ g \notin T_{S'}$.
\end{rk}
 
 \section{A glossary of forms}
\label{sec:Glossary}

For easy reference, we list the examples discussed in this paper.
\[
\begin{gathered}
L_{2m+1}(x_1, \ldots, x_{2m+1})
:=\displaystyle m(m+1)\sum_{i<j}^{}(x_i-x_j)^4 - \bigg(\sum_{i<j}^{}(x_i-x_j)^2\bigg)^2, \quad (\text{Theorem} \  \ref{thm:symmpsdnotsosquartics}); \\
C_{2m}(x_1, \ldots, x_{2m}) = L_{2m+1}(x_1, \ldots, x_{2m}, 0), \quad (\text{Theorem} \  \ref{thm:symmpsdnotsosquartics}); \\
M_r(x_1,\dots,x_n)= x_1^r + \cdots + x_n^r, \quad (\text{Theorem} \  \ref{CLR_quad}); \\
G_{2m+1}(x_1,\dots,x_{2m+1})= L_{2m+1}(x_1^2,\dots,x_{2m+1}^2), \quad (\text{Section} \  \ref{sec:PNSsymmOctics}); \\
D_{2m}(x_1,\dots,x_{2m})= G_{2m+1}(x_1,\dots,x_{2m},0), \quad (\text{Section} \  \ref{sec:PNSsymmOctics}); \\
T_{n}(x_1,\dots,x_n)=M_2\Big(M_2^3-5M_2M_4 + 6M_6\Big), \quad (\text{Theorem} \ \ref{NewEvenSymmPSDnotSOSOctic}); \\
P_n(x_1, \ldots, x_n)=(nM_4 - M_2^2)(M_2^3 - 5M_2M_4 + 6M_6), \quad (\text{Theorem} \  \ref{EvenSymmPSDnotSOSDecic}); \\
Q_n(x_1, \ldots, x_n) = (M_2^3 - 5M_2M_4 + 6M_6)(M_2^3 - 7M_2M_4+12M_6), \quad (\text{Theorem} \  \ref{2ndEvenSymmPSDnotSOSDodecic}); \\
R_n(x_1, \ldots, x_n) = \frac 1{12}\cdot (M_2^3 - 3M_2M_4+2M_6)(M_2^3 - 5M_2M_4 + 6M_6), \quad (\text{Theorem} \ \ref{EvenSymmPSDnotSOSDodecic}).
\end{gathered}
\]

\section*{Acknowledgements}
This paper contains results from the Ph.D. Thesis of Charu Goel \cite{Goel}, supervised by Salma Kuhlmann.  The authors are thankful to the Zukunftskolleg of the University of Konstanz for a mentorship program that supported the visit of Bruce Reznick to Konstanz and the visit of Charu Goel to Urbana in the summer of 2015. The first author acknowledges support from the Gleichstellungsrat of the University of Konstanz via a Postdoc Br\"{u}ckenstipendium in 2015 and the third author is thankful for the support of Simons Collaboration Grant 280987.

\noindent 

\end{document}